\documentclass[a4paper,oneside,11pt]{amsart} 
\usepackage[ansinew]{inputenc} 
\usepackage[T1]{fontenc}
\usepackage[english]{babel} 

\usepackage{enumerate} 
\usepackage{booktabs}

\usepackage{amsmath} 
\usepackage{amsfonts} 
\usepackage{amssymb}
\usepackage{amsthm}
\usepackage{latexsym}
\usepackage{bbm}
\usepackage{setspace}
\usepackage[top=3cm , bottom=3cm , left=2.1cm , right=2.1cm]{geometry}
\usepackage[pdftex]{graphicx}
  \DeclareGraphicsExtensions{.pdf,.png,.jpg,.jpeg,.mps}
  \usepackage{pgf}
  \usepackage{tikz}
\usepackage{floatflt}
\usepackage{url}
\usepackage{mathrsfs}
\usepackage{esint}
\usepackage{comment}

\makeatletter

\@namedef{subjclassname@2010}{%

  \textup{2010} Mathematics Subject Classification}

\makeatother


\theoremstyle{plain}
\newtheorem{lemma}{Lemma}
\newtheorem{theorem}{Theorem} 
\newtheorem{prop}{Proposition}
\newtheorem*{prop*}{Proposition}
\newtheorem*{lemma*}{Lemma}
\newtheorem{cor}{Corollary}
\newtheorem*{theorem*}{Theorem}

\theoremstyle{definition}
\newtheorem*{def*}{Definition}

\theoremstyle{remark}

\newtheorem*{remark}{Remark}
\newtheorem*{acks}{Acknowledgements}

\newcommand{\R}{\mathbb{R}}

\newcommand{\la}{\langle}
\newcommand{\ra}{\rangle}
\newcommand{\dom}{\textup{\textsf{Dom}}}
\newcommand{\real}{\textup{Re}}
\newcommand{\imag}{\textup{Im}}
\newcommand{\arc}{\textup{arc}}
\newcommand{\ray}{\textup{ray}}

\title[Wave extension problem]{Wave extension problem for the fractional Laplacian}
\author{Mikko Kemppainen}
\address[M. Kemppainen]{Departamento de Matem\'{a}ticas, Universidad Aut\'{o}noma de Madrid, 28049 Madrid, Spain}
\curraddr{Department of Mathematics and Statistics, University of Helsinki, FI-00014 Helsinki, Finland}
\email{mikko.k.kemppainen@helsinki.fi}

\author{Peter Sj\"{o}gren}
\address[P. Sj\"{o}gren]{Mathematical Sciences, University of Gothenburg and Mathematical Sciences, Chalmers, SE-412 96 G\"{o}teborg, Sweden}
\email{peters@chalmers.se}

\author{Jos\'{e} Luis Torrea}
\address[J. L. Torrea]{Departamento de Matem\'{a}ticas, Universidad Aut\'{o}noma de Madrid and ICMAT, 28049 Madrid, Spain}
\email{joseluis.torrea@uam.es}

\begin{document}

%
\subjclass[2010]{35L80 (Primary); 35C15, 33C10 (Secondary)}
\keywords{fractional Laplacian, wave equation, Schr\"{o}dinger group, oscillatory integrals, Bessel functions}

\begin{abstract}
  We show that the fractional Laplacian can be viewed as a Dirichlet-to-Neumann map for a degenerate hyperbolic problem,
  namely, the wave equation with an additional diffusion term that blows up at time zero. A solution to this wave extension
  problem is obtained from the Schr\"{o}dinger group by means of an oscillatory subordination formula, which also allows us
  to find kernel representations for such solutions. Asymptotics of related oscillatory
  integrals are analysed in order to determine the correct domains for initial data in the 
  general extension problem involving non-negative self-adjoint operators. An alternative approach using Bessel functions
  is also described.
\end{abstract}

\maketitle

\section{Introduction}

In the last decade a lot of attention has been devoted to fractional powers of the Laplacian $-\Delta$. 
The seminal paper by L. Caffarelli and L. Silvestre \cite{CAFFARELLISILVESTRE} showed that the operator
$(-\Delta)^{\sigma}$, with $0 < \sigma < 1$, 
mediates between Dirichlet and Neumann boundary values on $\R^d$ for a certain degenerate 
elliptic problem in the upper half-space $\R^{d+1}_+$.

Motivated by their work we show that up to a multiplicative constant
the fractional Laplacian can also be viewed as a Dirichlet-to-Neumann map 
$f\mapsto \lim_{t\to 0} t^{1-2\sigma} \partial_t u$ for the hyperbolic problem
\begin{equation}
\label{introWE}
  \begin{cases}
    \partial_t^2 u + \frac{1-2\sigma}{t}\partial_t u = \Delta u, \\
    u(\cdot , 0) = f .
  \end{cases}
\end{equation}
Here  $0 < \sigma < 1$, and for  $\sigma = \frac{1}{2}$ we have the classical wave equation.

We use the language of semigroups to study this problem in a parallel way to that used
by Stinga and Torrea \cite{STINGATORREA} for discussing the elliptic problem of Caffarelli and Silvestre
in the generality of non-negative self-adjoint operators. 
For an extension of their method to more general classes of
semigroup generators, see \cite{STINGAWAVE}.
We also wish to point out that an alternative approach to
problem \eqref{introWE} by means of Bessel functions is available (see Section \ref{sectionBessel}).

Our main result says that a solution to \eqref{introWE} is given by means of the `oscillatory' subordination formula
\begin{equation}
\label{introsubord}
  u(\cdot , t) = \frac{i^\sigma t^{2\sigma}}{4^\sigma \Gamma (\sigma)} \int_0^\infty e^{-i\frac{t^2}{4s}} \, e^{is\Delta}f 
    \,\frac{ds}{s^{1+\sigma}} ,
\end{equation}
also when $\Delta$ is replaced by a more general non-negative self-adjoint operator. 
It allows us to make use of the Schr\"{o}dinger kernel 
in order to find integral representations for solutions to the associated Neumann problem in different dimensions.

It is perhaps interesting to note that in the classical case when $\sigma = \frac{1}{2}$, the formula \eqref{introsubord}
leads to the identity
\begin{equation}
\label{wavegroup}
  \frac{e^{-it\sqrt{-\Delta}}}{\sqrt{-\Delta}}
  = \sqrt{\frac{i}{\pi}} \int_0^\infty e^{-i\frac{t^2}{4s}} \, e^{is\Delta} \,\frac{ds}{\sqrt{s}}
\end{equation}
for the wave group $(e^{-it\sqrt{-\Delta}})_{t\in\R}$. Recall in particular that the solution $u$
to the wave equation with Neumann initial data $g$ (given by the imaginary part of the wave group)
\begin{equation}
\label{sinsol}
  u(\cdot , t) = \frac{\sin (t\sqrt{-\Delta})}{\sqrt{-\Delta}} g
\end{equation}
can be expressed in dimensions $3$ and $2$, respectively, by 
\begin{equation}
\label{sphericalaverage}
  u(x,t) = t \fint_{\partial B(x,t)} g(y) \,dS(y) = \frac{c}{t} S|_{\partial B(0,t)} \ast g(x)
\end{equation}
and
\begin{equation}
\label{spatialaverage}
  u(x,t) = \frac{t^2}{2} \fint_{B(x,t)} \frac{g(y)}{(t^2 - |x-y|^2)^{\frac{1}{2}}} \,dy 
  = \frac{c' 1_{B(0,t)}}{(t^2 - |\cdot |^2)^{\frac{1}{2}}} \ast g(x) .
\end{equation}
(see \cite[Section 2.4, Equations (22) and (27)]{EVANS}).\footnote{The positive constants $c$, $c'$ appearing here and in what follows can have different values at different instances.}
These formulas highlight Huygens' principle concerning the finite propagation speed of solutions to the wave equation.
It would be peculiar if the smooth and fully supported Schr\"{o}dinger kernel would transform
by \eqref{wavegroup} into a singular and spherically supported one such as $\frac{1}{t} S|_{\partial B(0,t)}$
in \eqref{sphericalaverage}, and indeed there are restrictions in the relation of $\sigma$ and the dimension in our kernel
representations.

We will now describe the content of this paper. Let $L$ be a non-negative self-adjoint operator on
a Lebesgue space $\mathcal{L}^2$ and 
consider the Schr\"{o}dinger group $(e^{-isL})_{s\in\R}$.
In Theorem \ref{mainresult1} we show that for $0 < \sigma < 1$ the oscillatory integral
\begin{equation*}
  U_t^\sigma (f) = \frac{i^\sigma t^{2\sigma}}{4^\sigma \Gamma (\sigma)} \int_0^\infty e^{-i\frac{t^2}{4s}} \, e^{-isL}f \frac{ds}{s^{1+\sigma}},
\end{equation*}
when interpreted as an improper integral, converges weakly in $\mathcal{L}^2$ for suitable $f$ and solves the equation
\begin{equation}
  \label{introWEgen}
  \partial_t^2 u + \frac{1-2\sigma}{t} \partial_t u = -Lu, \quad t>0, 
\end{equation}
with the initial data
\begin{equation*}
  u(\cdot , 0) = f \quad \textup{and} \quad 
  \partial_t^\sigma u(\cdot , 0) = i^{-2\sigma} 
  c_\sigma L^\sigma f .
\end{equation*}
where 
\begin{equation*}
  \partial_t^\sigma u(\cdot , t) = \frac{1}{2\sigma} t^{1-2\sigma} \partial_t u(\cdot , t) .
\end{equation*}
Theorem \ref{mainresult1} is established via the Spectral theorem (see Proposition \ref{lambdaeq}); the requirement that
$f\in\dom (L^{\frac{\sigma}{2} + \frac{3}{4}})$ is needed to deal with the asymptotics of oscillatory integrals in Lemmas
\ref{firstestimate} and \ref{secondestimate}. Our method for analysing these integrals and overcoming the delicate
problems of convergence involves `sidestepping' the imaginary half-axis and moving to more suitable complex 
paths that provide sufficient decay.
In Corollary \ref{corollary}, a solution $u$ to \eqref{introWEgen} with real initial data
$u(\cdot , 0) = f$ and $\partial_t^\sigma u(\cdot , 0) = g$ is presented in terms of real and imaginary parts of
$U_t^\sigma$ as
\begin{equation*}
  u(\cdot , t) = 
  c_\sigma \real (i^{1-2\sigma} U_t^\sigma (f))
  - 
  c_\sigma' \imag (U_t^\sigma (L^{-\sigma}g)) .
\end{equation*}
For $\sigma = \frac{1}{2}$ this reduces to
\begin{equation}
\label{classicalsol}
  u(\cdot , t) = \real (e^{-it\sqrt{L}}f) - \imag \Big( \frac{e^{-it\sqrt{L}}}{\sqrt{L}} g \Big)
  = \cos (t\sqrt{L})f + \frac{\sin(t\sqrt{L})}{\sqrt{L}}g .
\end{equation}

In Section \ref{sectionkernel}, we return to the case of $L=-\Delta$ and 
find kernel representations for solutions $u$ to the Neumann problem
\begin{equation*}
  \begin{cases}
    \partial_t^2 u + \frac{1-2\sigma}{t}\partial_t u = \Delta u , \\
    u(\cdot , t) = 0 , \;\, \partial_t^\sigma u(\cdot , t) = g
  \end{cases}
\end{equation*}
on $\R^d$ for $d=1,2,\ldots , 5$. From the subordination formula \eqref{introsubord} we obtain
\begin{equation*}
  U_t^\sigma ((-\Delta)^{-\sigma}g) = 
  - \frac{i^{\sigma}}{\Gamma(\sigma)} \int_0^\infty e^{-i\frac{t^2}{4s}} \, e^{is\Delta}g \frac{ds}{s^{1-\sigma}},
\end{equation*}
and inserting the Schr\"{o}dinger kernel
\begin{equation*}
  \frac{1}{(4\pi is)^{\frac{d}{2}}} \, e^{i\frac{|x|^2}{4s}},
\end{equation*}
we arrive at the expression
\begin{equation}
\label{introexpression}
  c_{d,\sigma} i^{\sigma - \frac{d}{2}}
  \int_0^\infty e^{-i\frac{t^2}{4s}} \int_{\R^d} e^{i\frac{|x-y|^2}{4s}} g(y) dy \frac{ds}{s^{1+\frac{d}{2} - \sigma}} ,
\end{equation}
the imaginary part of which gives the solution $u$.

Theorem \ref{kernel123} concerns dimensions $d=1,2,3$ and $\sigma$ for which $0 < \frac{d}{2} - \sigma < 1$ and derives
from \eqref{introexpression} the formula
\begin{equation*}
  u(x,t) = c_{d,\sigma} \int_{B(x,t)} \frac{g(y)}{(t^2-|x-y|^2)^{\frac{d}{2} - \sigma}} dy .
\end{equation*}
The dimensions $d=3,4,5$ are considered in Theorem \ref{kernel345}, which states that for $\sigma$ such that
$1 < \frac{d}{2} - \sigma < 2$ we have
\begin{equation*}
  u(x,t) = \frac{c_{d,\sigma}}{t^2} 
    \int_{B(x,t)} \frac{2(\sigma + 1)g(y) + (y-x)\cdot \nabla g(y)}{(t^2 - |x-y|^2)^{\frac{d}{2} - \sigma - 1}} dy .
\end{equation*}
The limiting cases when $\frac{d}{2} - \sigma$ tends to either zero or one are studied in Theorem \ref{limitingcases};
in dimensions $d=2,3,4$ we then have
\begin{equation*}
    u_\sigma (x,t) \longrightarrow \frac{1}{c_dt} \int_{\partial B(x,t)} g(y)dS(y) .
\end{equation*}

In Theorem \ref{Besselsol} we show that the solution to
\begin{equation*}
  \begin{cases}
    \partial_t^2 u + \frac{1-2\sigma}{t}\partial_t u = \Delta u , \\
    u(\cdot , t) = f , \;\, \partial_t^\sigma u(\cdot , t) = g
  \end{cases}
\end{equation*}
with Schwartz initial data $f$ and $g$ is unique and can be given in terms of Bessel functions $J_{\pm\sigma}$, namely,
\begin{equation*}
  u(\cdot , t) = c_\sigma (t\sqrt{-\Delta})^\sigma J_{-\sigma}(t\sqrt{-\Delta})f + c_\sigma' (t\sqrt{-\Delta})^\sigma
  J_\sigma(t\sqrt{-\Delta}) (-\Delta)^{-\sigma}g .
\end{equation*}
For $\sigma = \frac{1}{2}$ this coincides with the classical formula \eqref{classicalsol} (with $L=-\Delta$);
indeed
\begin{equation*}
  \sqrt{r} J_{-1/2}(r) = \sqrt{\frac{2}{\pi}} \cos r \quad \textup{and} \quad 
  \sqrt{r} J_{1/2}(r) = \sqrt{\frac{2}{\pi}} \sin r
\end{equation*}
(and $c_{1/2} = c_{1/2}' = \sqrt{\pi / 2}$).
Growth estimates for Bessel functions allow us to deduce, by means of the Fourier transform, 
fixed-time estimates for solutions of the equation (see Theorem \ref{fixedtime}). Finally, a classical integral representation
for (modified) Bessel functions $K_\sigma$ is converted into an oscillatory integral formula coinciding with \eqref{introsubord},
thus closing the circle.

In order to avoid ambiguity, let us agree that $i^\alpha = e^{i\pi\alpha / 2}$. Moreover, by saying that an 
integral $\int_0^\infty$ is convergent, we mean that the limit of $\int_\varepsilon^R$ exists as $\varepsilon\to 0$
and $R\to\infty$.

By $\alpha \lesssim \beta$ we mean that there exists a constant $C$ such that $\alpha \leq C \beta$. Two quantities
$\alpha$ and $\beta$ are comparable, $\alpha \sim \beta$, if $\alpha\lesssim\beta$ and $\beta\lesssim\alpha$.

\begin{acks}
The first author gratefully acknowledges the financial support from the Finnish Academy of Science and Letters, V{\"a}is{\"a}l{\"a} Foundation, and from the Finnish Centre of Excellence in Analysis and Dynamics Research. He is thankful for the hospitality of the Department of Mathematics at the Autonomous University of Madrid during his stay.
The third author is supported by the grant MTM2011-28149-C02-01 from Spanish Government.
\end{acks}

\section{An oscillatory subordination formula}

Let $L$ be a non-negative self-adjoint operator on a Lebesgue space $\mathcal{L}^2$ 
and let $0 < \sigma < 1$. In this section, we study when and how a solution to the equation
\begin{equation}
  \label{sigmawaveequation}
      \partial_t^2 u + \frac{1-2\sigma}{t}\partial_t u = -Lu, \quad t>0,
  \end{equation}
can be obtained from the oscillatory integral
\begin{equation*}
  \frac{i^\sigma t^{2\sigma}}{4^\sigma \Gamma (\sigma)} \int_0^\infty e^{-i\frac{t^2}{4s}} \, e^{-isL} \,\frac{ds}{s^{1+\sigma}} .
\end{equation*}

The main result of the article is:

\begin{theorem}
\label{mainresult1}
  Let $0 < \sigma < 1$. The limit
  \begin{equation*}
  u(\cdot , t) = \frac{i^\sigma t^{2\sigma}}{4^\sigma \Gamma (\sigma)} 
    \lim_{\substack{\varepsilon \to 0 \\ R \to \infty}} \int_\varepsilon^R e^{-i\frac{t^2}{4s}} \, e^{-isL}f 
    \,\frac{ds}{s^{1+\sigma}} , \quad t>0,
  \end{equation*}
  exists weakly in $\mathcal{L}^2$ whenever $f\in \mathcal{L}^2$ (if $\sigma \leq \frac{1}{2}$) or 
  $f\in \dom (L^{\frac{\sigma}{2} - \frac{1}{4}})$ (if $\sigma > \frac{1}{2}$).
  If $f\in\dom (L^{\frac{\sigma}{2} + \frac{3}{4}})$ then $u$ is a weak solution to equation \eqref{sigmawaveequation}
  in the sense that
  \begin{equation*}
    \Big( \partial_t^2 + \frac{1-2\sigma}{t}\partial_t \Big) \la u(\cdot , t) , h \ra = - \la Lu(\cdot , t) , h \ra
  \end{equation*}
  for all $h\in\mathcal{L}^2$. Moreover, $u$ converges to the initial data
  \begin{equation*}
    u(\cdot , 0) = f \quad \textup{and} \quad
    \partial_t^\sigma u(\cdot , 0) = -i^{2\sigma} \frac{\Gamma (1-\sigma)}{\sigma 4^\sigma \Gamma (\sigma)}
    L^\sigma f
  \end{equation*}
  weakly in $\mathcal{L}^2$. Here $\partial_t^\sigma = \frac{1}{2\sigma} t^{1-2\sigma} \partial_t$.
\end{theorem}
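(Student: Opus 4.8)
\medskip

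\noindent\textbf{Proof strategy.}
The plan is to reduce the whole statement, via the Spectral theorem, to a one-parameter family of \emph{scalar} oscillatory integrals, and only at the end to carry the scalar facts back to $\mathcal{L}^2$ by dominated convergence. Write $L=\int_0^\infty\lambda\,dE(\lambda)$ and, for $\lambda\ge 0$ and $t>0$, set
\begin{equation*}
  \phi_\lambda(t)=\frac{i^\sigma t^{2\sigma}}{4^\sigma\Gamma(\sigma)}\int_0^\infty e^{-i\frac{t^2}{4s}}\,e^{-is\lambda}\,\frac{ds}{s^{1+\sigma}},
\end{equation*}
and let $\phi_\lambda^{\varepsilon,R}(t)$ be the same expression with $\int_0^\infty$ replaced by $\int_\varepsilon^R$. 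For $f,h\in\mathcal{L}^2$, the Spectral theorem together with Fubini on the finite interval $[\varepsilon,R]$ (where the integrand is bounded) gives
\begin{equation*}
  \Big\langle\,\frac{i^\sigma t^{2\sigma}}{4^\sigma\Gamma(\sigma)}\int_\varepsilon^R e^{-i\frac{t^2}{4s}}\,e^{-isL}f\,\frac{ds}{s^{1+\sigma}},\,h\,\Big\rangle=\int_0^\infty\phi_\lambda^{\varepsilon,R}(t)\,d\langle E(\lambda)f,h\rangle ,
\end{equation*}
so everything reduces to statements about the family $(\phi_\lambda)_\lambda$ integrated against the finite complex measures $d\langle E(\lambda)f,h\rangle$.

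The scalar analysis is the heart of the matter, and I would settle it first; this is Proposition~\ref{lambdaeq} together with Lemmas~\ref{firstestimate} and~\ref{secondestimate}. Two things are needed. \emph{(i) Convergence and the scalar ODE:} for each fixed $\lambda>0$, $t>0$, one has $\phi_\lambda^{\varepsilon,R}(t)\to\phi_\lambda(t)$ as $\varepsilon\to0$, $R\to\infty$, and the limit $\phi_\lambda$ is $C^2$ on $(0,\infty)$, solves $\phi_\lambda''+\tfrac{1-2\sigma}{t}\phi_\lambda'=-\lambda\phi_\lambda$, and has $\phi_\lambda(0^+)=1$. Convergence at $s=\infty$ is easy (oscillation of $e^{-is\lambda}$ for $\lambda>0$, and simply $\sigma>0$ when $\lambda=0$); convergence at $s=0$ is the subtle point, since the weight $s^{-1-\sigma}$ blows up and only the rapid oscillation of $e^{-it^2/(4s)}$ can compensate. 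I would handle this by rotating the contour off the positive real axis onto a ray $s=e^{-i\theta}\rho$ on which \emph{both} exponentials decay (the ``sidestepping'' of the introduction), and then analyse the result by a complex stationary-phase argument about the critical point $s=t/(2\sqrt\lambda)$ of the phase $\tfrac{t^2}{4s}+s\lambda$, where that phase equals $t\sqrt\lambda$. \emph{(ii) Uniform growth in $\lambda$:} locally uniformly in $t$ and uniformly in $0<\varepsilon<R$,
\begin{equation*}
  |\phi_\lambda^{\varepsilon,R}(t)|\lesssim (1+\lambda)^{\max(\frac{\sigma}{2}-\frac14,\,0)},\qquad |\partial_t\phi_\lambda(t)|+|\partial_t^2\phi_\lambda(t)|+\lambda\,|\phi_\lambda(t)|\lesssim (1+\lambda)^{\frac{\sigma}{2}+\frac34}.
\end{equation*}
The exponent $\tfrac{\sigma}{2}-\tfrac14$ is exactly what the stationary-phase contribution $\sim t^{\sigma-\frac12}\lambda^{\frac{\sigma}{2}-\frac14}$ produces (equivalently, it mirrors the decay $J_{-\sigma}(r)\sim\sqrt{2/\pi r}\cos(\cdots)$ of the Bessel representation of $\phi_\lambda$), and each $t$-derivative costs one more power of $\sqrt\lambda$.

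Granting (i)--(ii), the operator statements follow by routine measure theory. For the weak convergence, the Cauchy--Schwarz inequality for spectral measures bounds $\int_0^\infty(1+\lambda)^{\max(\frac{\sigma}{2}-\frac14,0)}\,d|\langle E(\lambda)f,h\rangle|$ by $\|h\|\,\|(I+L)^{\max(\frac{\sigma}{2}-\frac14,0)}f\|$, which is finite precisely when $f\in\mathcal{L}^2$ (if $\sigma\le\tfrac12$) or $f\in\dom(L^{\frac{\sigma}{2}-\frac14})$ (if $\sigma>\tfrac12$); dominated convergence in $\lambda$ then gives $\langle u(\cdot,t),h\rangle=\int_0^\infty\phi_\lambda(t)\,d\langle E(\lambda)f,h\rangle$. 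For the weak equation, assuming $f\in\dom(L^{\frac{\sigma}{2}+\frac34})$, the second bound in (ii) makes $(1+\lambda)^{\frac{\sigma}{2}+\frac34}$ integrable against $d|\langle E(\lambda)f,h\rangle|$, so $\partial_t$, $\partial_t^2$ and multiplication by $-\lambda$ may each be moved under the spectral integral (in particular $u(\cdot,t)\in\dom(L)$), and applying the scalar ODE pointwise in $\lambda$ yields $\big(\partial_t^2+\tfrac{1-2\sigma}{t}\partial_t\big)\langle u(\cdot,t),h\rangle=-\int_0^\infty\lambda\,\phi_\lambda(t)\,d\langle E(\lambda)f,h\rangle=-\langle Lu(\cdot,t),h\rangle$. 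For the initial data I would use the behaviour at $t=0$: the indicial equation $r(r-2\sigma)=0$ gives exponents $0$ and $2\sigma$, and the oscillatory integral selects $\phi_\lambda(t)=1+c_\sigma'\,(t\sqrt\lambda)^{2\sigma}+o(t^{2\sigma})$ with $c_\sigma'=-i^{2\sigma}\frac{\Gamma(1-\sigma)}{\sigma4^\sigma\Gamma(\sigma)}$ (computed from the explicit scalar integral, via $\Gamma(-\sigma)=\Gamma(1-\sigma)/(-\sigma)$). Hence $\phi_\lambda(t)\to1$ gives $\langle u(\cdot,t),h\rangle\to\langle f,h\rangle$, while $\partial_t^\sigma\phi_\lambda(t)=\tfrac{1}{2\sigma}t^{1-2\sigma}\partial_t\phi_\lambda(t)\to c_\sigma'\lambda^\sigma$, and since $\dom(L^{\frac{\sigma}{2}+\frac34})\subset\dom(L^\sigma)$ a last application of dominated convergence produces $\partial_t^\sigma u(\cdot,0)=c_\sigma' L^\sigma f$ weakly, as claimed.

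The one genuine obstacle is packed into (i)--(ii): proving convergence of the oscillatory integral as $s\to0^+$, where absolute convergence fails badly, and pinning down the \emph{sharp} power of $\lambda$ uniformly over all truncations $0<\varepsilon<R$. This is precisely what forces the contour deformation and the delicate oscillatory (stationary-phase / van der Corput type) estimates of Lemmas~\ref{firstestimate} and~\ref{secondestimate}; with that scalar package in hand, the passage to an arbitrary non-negative self-adjoint $L$ is only spectral calculus.
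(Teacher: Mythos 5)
Your overall architecture is exactly that of the paper: reduce to the scalar integrals $I_\sigma(\lambda,t)$ via the Spectral theorem, prove convergence, the ODE, the initial values and $\lambda$-growth bounds for these scalars (Proposition~\ref{lambdaeq}, Lemmas~\ref{firstestimate} and~\ref{secondestimate}), then return to $\mathcal{L}^2$ by dominated convergence. The problem is that the scalar package (i)--(ii), which you yourself identify as the only genuine obstacle, is not actually proved, and the mechanism you sketch for it does not work as stated. There is no ray $s=e^{-i\theta}\rho$ on which \emph{both} exponentials decay: writing $s=\rho e^{i\theta}$, one has $\real\big(-i\tfrac{t^2}{4s}\big)=-\tfrac{t^2}{4\rho}\sin\theta$ while $\real(-i\lambda s)=\lambda\rho\sin\theta$, so $e^{-it^2/(4s)}$ decays only in the upper half-plane and $e^{-i\lambda s}$ only in the lower half-plane. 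The paper's ``sidestepping'' is therefore local: only the piece of the integral near the bad endpoint $s=0$, over a \emph{bounded} segment, is rotated into the half-plane where $e^{-it^2/(4s)}$ decays (there the other factor is merely bounded because the path is bounded), while the contribution from large $s$ and from the region away from the stationary point $s=A=\tfrac{t\sqrt\lambda}{2}$ is handled by non-stationary-phase integration by parts with the weight $s^{\sigma+1}/(s^2-A^2)$, and the window $|s-A|\lesssim\sqrt A$ is estimated trivially; this is precisely what produces the sharp exponents $\lambda^{\frac\sigma2-\frac14}$ and $\lambda^{\frac\sigma2+\frac34}$, and for the $t$-derivatives one must in addition split at points independent of $t$ to justify differentiating under the integral (Lemma~\ref{secondestimate}). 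Without some version of this argument, (i)--(ii) is an assumption rather than a proof.

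A second, smaller gap is in the Neumann initial condition. You derive $\partial_t^\sigma\phi_\lambda(t)\to c_\sigma'\lambda^\sigma$ from a pointwise small-$t$ expansion, but to conclude $\partial_t^\sigma u(\cdot,0)=c_\sigma'L^\sigma f$ weakly you need a bound on $\partial_t^\sigma\phi_\lambda(t)$, uniform as $t\to0$, by an integrable function of $\lambda$; your bound (ii) is only locally uniform in $t\in(0,\infty)$, and the factor $t^{1-2\sigma}$ in $\partial_t^\sigma$ blows up as $t\to0$ when $\sigma>\tfrac12$, so it does not supply the required domination. The paper resolves this with the exact identity $\partial_t^\sigma I_\sigma(\lambda,t)=-i^{2\sigma}\frac{\Gamma(1-\sigma)}{\sigma4^\sigma\Gamma(\sigma)}\lambda^\sigma I_{1-\sigma}(\lambda,t)$, obtained by the change of variables $r=\tfrac{t^2\lambda}{4s}$ in the truncated integral, which reduces everything to Lemma~\ref{firstestimate} applied with $\sigma$ replaced by $1-\sigma$ and gives the uniform bound $\lambda^\sigma(1+\lambda^{\frac14-\frac\sigma2})\lesssim\max(1,\lambda^{\frac\sigma2+\frac34})$ for bounded $t$. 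You should either prove this identity (it also explains why $\dom(L^{\frac\sigma2+\frac34})$ suffices) or establish a uniform small-$t$ estimate for $\partial_t^\sigma\phi_\lambda$ directly.
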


A solution for any combination of Dirichlet and Neumann initial data can be obtained in terms of real and imaginary parts:

\begin{cor}
\label{corollary}
  Write
  \begin{equation*}
    U_t^\sigma = \frac{i^\sigma t^{2\sigma}}{4^\sigma \Gamma (\sigma)} 
    \int_0^\infty e^{-i\frac{t^2}{4s}} \, e^{-isL} \,\frac{ds}{s^{1+\sigma}}
  \end{equation*}
  for the solution operator. Then
  \begin{itemize}
    \item the function
    \begin{equation*}
      u(\cdot , t) = \frac{1}{\sin (\sigma\pi)} \, \real (i^{1-2\sigma} U_t^\sigma (f))
    \end{equation*} 
    solves equation \eqref{sigmawaveequation} with real initial data
    \begin{equation*}
      u(\cdot , 0) = f, \quad \partial_t^\sigma u(\cdot , 0) = 0 ,
    \end{equation*}
    whenever $f\in\dom (L^{\frac{\sigma}{2} + \frac{3}{4}})$, and
    
    \item the function 
    \begin{equation*}
      u(\cdot , t) = - \frac{\sigma 4^\sigma \Gamma (\sigma)}{\sin (\sigma \pi) \Gamma (1-\sigma)} \, \imag (U_t^\sigma (L^{-\sigma}g))
    \end{equation*} 
    solves equation \eqref{sigmawaveequation} with real initial data
    \begin{equation*}
        u(\cdot , 0) = 0, \quad \partial_t^\sigma u(\cdot , 0) = g ,
    \end{equation*}
    whenever $g\in\dom (L^{-\frac{\sigma}{2} + \frac{3}{4}})$.
  \end{itemize}
\end{cor}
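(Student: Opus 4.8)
The plan is to deduce the corollary directly from Theorem \ref{mainresult1}, using only that equation \eqref{sigmawaveequation} is linear with \emph{real} coefficients (the term $\frac{1-2\sigma}{t}$ and the operator $-L$), so that, provided $L$ commutes with complex conjugation (which holds for $L=-\Delta$ and is a standing assumption otherwise), multiplying a weak solution by a complex scalar or taking its real or imaginary part again produces a weak solution, and $L^{\pm\sigma}$ map real elements to real elements. Since $\partial_t^\sigma = \frac{1}{2\sigma}t^{1-2\sigma}\partial_t$ is linear in $t$, it too commutes with scalar multiplication and with $\real$ and $\imag$. After these reductions the proof is just bookkeeping with the constant $c_\sigma := \frac{\Gamma(1-\sigma)}{\sigma 4^\sigma \Gamma(\sigma)}$ and the convention $i^\alpha = e^{i\pi\alpha/2}$.

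For the first item I would set $v := i^{1-2\sigma} U_t^\sigma(f)$ and read off its initial data from Theorem \ref{mainresult1} by linearity: $v(\cdot,0) = i^{1-2\sigma} f$ and $\partial_t^\sigma v(\cdot,0) = -\,i^{1-2\sigma} i^{2\sigma} c_\sigma L^\sigma f = -\,i\, c_\sigma L^\sigma f$, using $i^{1-2\sigma}i^{2\sigma} = e^{i\pi/2} = i$. Taking the real part and using $\real(i^{1-2\sigma}) = \cos(\tfrac\pi2 - \sigma\pi) = \sin(\sigma\pi)$ together with $\real(i\, c_\sigma L^\sigma f) = 0$ (as $L^\sigma f$ is real), one gets that $\real(i^{1-2\sigma} U_t^\sigma(f))$ is a weak solution of \eqref{sigmawaveequation} with initial data $\sin(\sigma\pi) f$ and $0$; dividing by $\sin(\sigma\pi)$ yields the stated function. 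The hypothesis $f\in\dom(L^{\sigma/2+3/4})$ is exactly the one needed to invoke Theorem \ref{mainresult1}.

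For the second item I would instead feed $L^{-\sigma}g$ into $U_t^\sigma$. Theorem \ref{mainresult1} applies as soon as $L^{-\sigma}g\in\dom(L^{\sigma/2+3/4})$, i.e.\ $g\in\dom(L^{-\sigma/2+3/4})$, and it produces a weak solution with $u(\cdot,0) = L^{-\sigma}g$ and $\partial_t^\sigma u(\cdot,0) = -\,i^{2\sigma} c_\sigma L^\sigma L^{-\sigma}g = -\,i^{2\sigma} c_\sigma g$. Passing to the imaginary part annihilates the first datum (since $L^{-\sigma}g$ is real) and, using $\imag(i^{2\sigma}) = \imag(e^{i\pi\sigma}) = \sin(\sigma\pi)$, turns the second into $-c_\sigma\sin(\sigma\pi)\, g$; dividing by $-c_\sigma\sin(\sigma\pi)$, i.e.\ multiplying by $-\frac{\sigma 4^\sigma\Gamma(\sigma)}{\sin(\sigma\pi)\Gamma(1-\sigma)}$, gives the function in the statement.

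The only place where care is needed, rather than routine computation, is the reduction in the first paragraph: one must verify that taking real and imaginary parts is compatible with the weak formulation $\la u(\cdot,t),h\ra = -\la Lu(\cdot,t),h\ra$, which comes down to $L$ being conjugation-invariant (so that $L\,\real u = \real Lu$ and $L^{\pm\sigma}$ preserve real-valuedness), and that $L^{-\sigma}g$ and the composition $L^{\sigma/2+3/4}L^{-\sigma}$ make sense when $0$ lies in the spectrum of $L$ — which is precisely why the negative powers in the hypotheses on $g$ are interpreted on the domains indicated in the statement. Once these points are in place, no analytic input beyond Theorem \ref{mainresult1} is required.
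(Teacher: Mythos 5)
Your proposal is correct and follows essentially the same route as the paper: both deduce the corollary from Theorem \ref{mainresult1} by linearity, using that the equation has real coefficients so real and imaginary parts of solutions are again solutions, and then compute the limits of the initial data with $\real(i^{1-2\sigma})=\sin(\sigma\pi)$, $\imag(i^{2\sigma})=\sin(\sigma\pi)$, and the constant $c_\sigma=\frac{\Gamma(1-\sigma)}{\sigma 4^\sigma\Gamma(\sigma)}$. Your explicit remarks on conjugation-invariance of $L$ and on interpreting $L^{-\sigma}g$ via the domain hypothesis $g\in\dom(L^{-\frac{\sigma}{2}+\frac{3}{4}})$ are points the paper leaves implicit, but they do not change the argument.
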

\begin{proof}[Proof of Corollary \ref{corollary}]
  To prove the first claim, it suffices to calculate
  \begin{equation*}
    \lim_{t\to 0} \real (i^{1-2\sigma} U_t^\sigma (f)) = \real (i^{1-2\sigma} \lim_{t\to 0}U_t^\sigma (f))
    = \real (i^{i-2\sigma} f) = \cos ((1-2\sigma)\frac{\pi}{2}) f = \sin (\sigma\pi) f
  \end{equation*}
  and to note that
  \begin{equation*}
    \lim_{t\to 0} \partial_t^\sigma \real (i^{1-2\sigma} U_t^\sigma (f))
    = \real (i^{1-2\sigma} \lim_{t\to 0} \partial_t^\sigma U_t^\sigma (f))
    = \real \Big( -i \frac{\Gamma (1-\sigma)}{\sigma 4^\sigma \Gamma (\sigma)} L^\sigma f \Big) = 0 .
  \end{equation*}
  
  For the second claim we begin by noting that
  \begin{equation*}
    \lim_{t\to 0} \imag (U_t^\sigma (L^{-\sigma}g)) = \imag (\lim_{t\to 0} U_t^\sigma (L^{-\sigma}g)) 
    = \imag (L^{-\sigma}g) = 0,
  \end{equation*}
  and calculate
  \begin{equation*}
    \lim_{t\to 0} \partial_t^\sigma \imag (U_t^\sigma (L^{-\sigma}g))
    = \imag (\lim_{t\to 0} \partial_t^\sigma U_t^\sigma (L^{-\sigma}g))
    = \imag \Big( -i^{2\sigma} \frac{\Gamma (1-\sigma)}{\sigma 4^\sigma \Gamma (\sigma)} g \Big)
    = - \sin (\sigma\pi) \frac{\Gamma (1-\sigma)}{\sigma 4^\sigma \Gamma (\sigma)} g .
  \end{equation*}
\end{proof}

The rest of the section is devoted to the proof of Theorem \ref{mainresult1}, and we start with several auxiliary results. 
Notice that the following simple lemma is false for $\sigma = 1$.

\begin{lemma}
\label{complexgamma}
  Let $0 < \sigma < 1$. Then
  \begin{equation*}
    \Gamma (\sigma) 
    = i^\sigma \lim_{R\to\infty} \int_0^R e^{-is} \,\frac{ds}{s^{1-\sigma}} .
  \end{equation*}
\end{lemma}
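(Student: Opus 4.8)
The plan is to prove this by rotating the contour in the standard integral representation $\Gamma(\sigma) = \int_0^\infty e^{-s} s^{\sigma-1}\,ds$ (valid since $\sigma > 0$) from the positive real half-axis to the positive imaginary half-axis, by means of Cauchy's theorem applied over a sector with two quarter-circle arcs.

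Concretely, first I would fix the holomorphic branch $z^{\sigma-1} = e^{(\sigma-1)\log z}$ on $\C\setminus(-\infty,0]$ with $\imag\log \in (-\pi,\pi)$, so that $f(z) := e^{-z} z^{\sigma-1}$ is holomorphic on the open first quadrant and continuous up to its boundary away from the origin. Applying Cauchy's theorem to the (positively oriented) boundary of the sector $\{\, \varepsilon < |z| < R,\ 0 < \arg z < \tfrac{\pi}{2} \,\}$, with $0 < \varepsilon < R$, and parametrizing the imaginary edge by $z = it$, gives
\[
  \int_\varepsilon^R f(s)\,ds \;+\; \int_{\Gamma_R} f \;-\; i\int_\varepsilon^R f(it)\,dt \;-\; \int_{\Gamma_\varepsilon} f \;=\; 0,
\]
where $\Gamma_R$ and $\Gamma_\varepsilon$ are the quarter-circle arcs of radii $R$ and $\varepsilon$ joining the two half-axes, oriented counterclockwise.

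Next I would estimate the arc integrals. On $\Gamma_\varepsilon$ one has $|f(z)| \lesssim \varepsilon^{\sigma-1}$, so $\bigl|\int_{\Gamma_\varepsilon} f\bigr| \lesssim \varepsilon^{\sigma} \to 0$ as $\varepsilon\to0$, using $\sigma > 0$. On $\Gamma_R$, writing $z = Re^{i\theta}$ gives $|f(z)| = e^{-R\cos\theta}R^{\sigma-1}$, and by Jordan's inequality $\cos\theta = \sin(\tfrac{\pi}{2}-\theta) \geq \tfrac{2}{\pi}(\tfrac{\pi}{2}-\theta)$ on $[0,\tfrac{\pi}{2}]$,
\[
  \Big| \int_{\Gamma_R} f \Big| \;\leq\; R^\sigma \int_0^{\pi/2} e^{-R\cos\theta}\,d\theta \;\leq\; R^\sigma \int_0^{\pi/2} e^{-\frac{2R}{\pi}\phi}\,d\phi \;\leq\; \frac{\pi}{2}\,R^{\sigma-1} \longrightarrow 0
\]
as $R\to\infty$. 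This large-arc estimate is the only substantive point of the argument, and it is precisely where $\sigma<1$ is needed: for $\sigma=1$ the factor $R^{\sigma-1}$ no longer decays, which matches the remark that the lemma fails in that case.

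Finally, letting $\varepsilon\to0$ and $R\to\infty$ (the lower endpoint causing no trouble since $e^{-s}s^{\sigma-1}$ is absolutely integrable near $0$), the identity becomes
\[
  \Gamma(\sigma) = \int_0^\infty e^{-s} s^{\sigma-1}\,ds = i\lim_{R\to\infty}\int_0^R e^{-it}(it)^{\sigma-1}\,dt .
\]
For the chosen branch $(it)^{\sigma-1} = i^{\sigma-1}t^{\sigma-1}$ (since $\arg(it)=\tfrac{\pi}{2}$), and with the paper's convention $i\cdot i^{\sigma-1} = i^{\sigma} = e^{i\pi\sigma/2}$, this is exactly
\[
  \Gamma(\sigma) = i^\sigma \lim_{R\to\infty}\int_0^R e^{-is}\,\frac{ds}{s^{1-\sigma}},
\]
the limit being genuinely only conditionally convergent at infinity, as is furnished by the contour-rotation step.
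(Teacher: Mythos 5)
Your argument is correct and is essentially the same as the paper's: both rotate the contour from the positive real half-axis to the imaginary half-axis via Cauchy's theorem on a sector, kill the small arc with an $\varepsilon^{\sigma}$ bound and the large arc with Jordan's inequality (giving $R^{\sigma-1}\to 0$, which is exactly where $\sigma<1$ enters), and then read off the factor $i^{\sigma}$ from the branch on the imaginary edge. Nothing further is needed.
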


\begin{proof}
  For $0 < \varepsilon < R < \infty$ we may use Cauchy's integral theorem to write
  \begin{equation*}
    \int_\varepsilon^R e^{-s} \,\frac{ds}{s^{1-\sigma}} = \Big( \int_{\arc_\varepsilon} + \int_{\ray_{\varepsilon , R}} 
    + \int_{\arc_R} \Big) e^{-z} \,\frac{dz}{z^{1-\sigma}} ,
  \end{equation*}
  where $\arc_\varepsilon$ and $\arc_R$ are
  \begin{equation*}
    \theta \mapsto \varepsilon e^{i\theta} \quad \textup{and} \quad \theta \mapsto Re^{i(\frac{\pi}{2} - \theta )} , 
    \quad 0\leq \theta \leq \frac{\pi}{2} , 
  \end{equation*}
  and  $\ray_{\varepsilon , R}$ is
  \begin{equation*}
    s\mapsto is , \quad \varepsilon \leq s \leq R.
  \end{equation*}
  
  Now
  \begin{equation*}
    \Big| \int_{\arc_\varepsilon} e^{-z} \,\frac{dz}{z^{1-\sigma}} \Big|
    = \varepsilon^\sigma \Big| \int_0^{\frac{\pi}{2}} e^{-\varepsilon e^{i\theta}} e^{i\sigma\theta} \,d\theta \Big|
    \leq \varepsilon^\sigma \int_0^{\frac{\pi}{2}} e^{-\varepsilon \cos \theta} \,d\theta \leq \frac{\pi}{2} \varepsilon^\sigma
    \longrightarrow 0 ,
  \end{equation*}
  as $\varepsilon \to 0$.  
  
  Similarly,
  \begin{align*}
    \Big| \int_{\arc_R} e^{-z} \,\frac{dz}{z^{1-\sigma}} \Big|
    &= R^\sigma \Big| \int_0^{\frac{\pi}{2}} e^{-R e^{i(\frac{\pi}{2} - \theta)}} e^{i\sigma(\frac{\pi}{2} - \theta)} 
    \,d\theta \Big| 
    \leq R^\sigma \int_0^{\frac{\pi}{2}} e^{-R\sin \theta} \,d\theta \\
    &\leq R^\sigma \int_0^{\infty} e^{-\frac{2}{\pi} R\theta} \,d\theta 
    \leq R^{\sigma - 1} \int_0^{\infty} e^{-\frac{2}{\pi}\theta'} \,d\theta'
    \longrightarrow 0 ,
  \end{align*}
  as $R \to \infty$, where we used the inequality $\sin \theta \geq \frac{2}{\pi}\theta$, for $0 < \theta < \frac{\pi}{2}$.

  Thus we have
  \begin{equation*}
    \Gamma (\sigma) = \lim_{\substack{\varepsilon\to 0 \\ R\to\infty}} \int_\varepsilon^R e^{-s} \,\frac{ds}{s^{1-\sigma}}
    = \lim_{\substack{\varepsilon\to 0 \\ R\to\infty}} \int_{\ray_{\varepsilon , R}} e^{-z} \,\frac{dz}{z^{1-\sigma}}
    = i^\sigma \lim_{R\to\infty}\int_0^R e^{-is} \,\frac{ds}{s^{1-\sigma}} .
  \end{equation*}
\end{proof}

\begin{prop}
\label{lambdaeq}
  Let $0 < \sigma < 1$.
  For all $\lambda \geq 0$ the limit
  \begin{equation*}
    I_\sigma (\lambda ,t) 
    = \frac{i^\sigma}{\Gamma (\sigma)} \lim_{\substack{\varepsilon \to 0 \\ R \to \infty}}
    \int_\varepsilon^R e^{-is} e^{-i\frac{t^2}{4s}\lambda} \,\frac{ds}{s^{1-\sigma}} , \quad t>0,
  \end{equation*}
  exists and $I_\sigma (\lambda , \cdot )$ satisfies the equation
  \begin{equation}
  \label{WElambda}
    \Big( \partial_t^2 + \frac{1-2\sigma}{t}\partial_t \Big) I_\sigma (\lambda ,t) = -\lambda I_\sigma (\lambda , t) ,
  \end{equation}
  with the initial value $\lim_{t\to 0} I_\sigma (\lambda , t) = 1$. Moreover,
  \begin{equation*}
    \partial_t^\sigma I_\sigma (\lambda , t) = -i^{2\sigma} \frac{\Gamma (1-\sigma)}{\sigma 4^\sigma \Gamma (\sigma)}
    \lambda^\sigma I_{1-\sigma}(\lambda , t) ,
  \end{equation*}
  where $\partial_t^\sigma = \frac{1}{2\sigma} t^{1-2\sigma} \partial_t$.
\end{prop}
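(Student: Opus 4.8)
The plan is to deduce all four assertions from the contour-deformation idea of Lemma~\ref{complexgamma}. Write $\phi(s,t) = e^{-is}\,e^{-i\frac{t^2}{4s}\lambda}$ for the oscillatory factor, so that $\partial_s\phi = \phi\cdot\big(-i + i\frac{t^2}{4s^2}\lambda\big)$ and $\partial_t\phi = \phi\cdot\big(-i\frac{t}{2s}\lambda\big)$. The case $\lambda = 0$ needs no work: it is precisely Lemma~\ref{complexgamma}, which also shows $I_\sigma(0,t)\equiv 1$; since this is constant in $t$, equation \eqref{WElambda}, the initial value, and the $\partial_t^\sigma$-identity (whose right-hand side carries the factor $\lambda^\sigma$) all hold trivially. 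So assume $\lambda > 0$ from now on.

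For existence, near $s = 0$ the integrand is $O(s^{\sigma-1})$, hence absolutely integrable since $\sigma > 0$; near $s = \infty$, split off $e^{-i\frac{t^2}{4s}\lambda} = 1 + \big(e^{-i\frac{t^2}{4s}\lambda}-1\big)$, the error being $O(1/s)$, so that $\int^\infty e^{-is}\big(e^{-i\frac{t^2}{4s}\lambda}-1\big)s^{\sigma-1}\,ds$ converges absolutely ($\sigma - 2 < -1$) while $\int^\infty e^{-is}s^{\sigma-1}\,ds$ converges by the Cauchy-theorem argument of Lemma~\ref{complexgamma}. That same argument in fact lets one replace the positive real axis by a contour $\gamma$ from $0$ to $\infty$ that leaves the origin into the open upper half-plane — where $|e^{-i\frac{t^2}{4s}\lambda}|$ decays super-exponentially and swamps the growth of $s^{\sigma-1}$ — and escapes to infinity into the open lower half-plane — where $|e^{-is}|$ decays exponentially — the small and large circular arcs joining $\gamma$ to the axis contributing nothing in the limit, just as $\arc_\varepsilon$ and $\arc_R$ do in Lemma~\ref{complexgamma}. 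Along $\gamma$ the integrand, together with all its $s$- and $t$-derivatives, decays rapidly at both endpoints, locally uniformly for $t > 0$; hence $I_\sigma(\lambda,\cdot)\in C^\infty(0,\infty)$, it may be differentiated under the integral sign, and it may be integrated by parts in $s$ along $\gamma$ with vanishing boundary terms. This contour representation is the engine for the rest of the proof.

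To obtain \eqref{WElambda}, differentiate $\phi\,s^{\sigma-1}$ twice in $t$ along $\gamma$; using the formula for $\partial_t\phi$ one gets $\big(\partial_t^2 + \frac{1-2\sigma}{t}\partial_t\big)(\phi\,s^{\sigma-1}) = -\phi\,s^{\sigma-1}\big(\frac{t^2\lambda^2}{4s^2} + i\frac{(1-\sigma)\lambda}{s}\big)$, so that \eqref{WElambda} amounts, after dividing by $\lambda$, to the identity $\int_\gamma \phi\,s^{\sigma-1}\big(1 - i\frac{1-\sigma}{s} - \frac{t^2\lambda}{4s^2}\big)\,ds = 0$. Here I would integrate the middle term by parts, writing $s^{\sigma-2} = \frac{d}{ds}\big(\frac{s^{\sigma-1}}{\sigma-1}\big)$ — this is the one place where $\sigma\neq 1$ is used, matching the restriction $0<\sigma<1$ — and substituting $\partial_s\phi = \phi(-i+i\frac{t^2\lambda}{4s^2})$; the boundary terms vanish on $\gamma$, and one finds $-i(1-\sigma)\int_\gamma\phi\,s^{\sigma-2}\,ds = -\int_\gamma\phi\,s^{\sigma-1}\,ds + \frac{t^2\lambda}{4}\int_\gamma\phi\,s^{\sigma-3}\,ds$, so the three terms cancel identically. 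For the initial value, Lemma~\ref{complexgamma} gives $I_\sigma(\lambda,t) - 1 = \frac{i^\sigma}{\Gamma(\sigma)}\int_0^\infty e^{-is}\big(e^{-i\frac{t^2}{4s}\lambda}-1\big)s^{\sigma-1}\,ds$ (an absolutely convergent integral); splitting at $s = t$ and using $|e^{i\theta}-1|\leq 2$ on $(0,t)$ and $|e^{i\theta}-1|\leq|\theta|$ on $(t,\infty)$ bounds the right-hand side by a constant times $t^\sigma + \lambda\,t^{\sigma+1}\to 0$.

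Finally, along $\gamma$ we have $\partial_t I_\sigma(\lambda,t) = \frac{i^\sigma}{\Gamma(\sigma)}\big(-\tfrac{it\lambda}{2}\big)\int_\gamma e^{-is}e^{-i\frac{t^2}{4s}\lambda}\,s^{\sigma-2}\,ds$, and in this integral I would perform the substitution $s\mapsto \frac{t^2\lambda}{4s}$: it interchanges the two exponential factors, turns $s^{\sigma-2}\,ds$ into $\big(\frac{t^2\lambda}{4}\big)^{\sigma-1}s^{-\sigma}\,ds$, and maps $\gamma$ to another admissible contour that Cauchy's theorem deforms back to the one defining $I_{1-\sigma}(\lambda,t)$; thus $\int_\gamma e^{-is}e^{-i\frac{t^2}{4s}\lambda}s^{\sigma-2}\,ds = \big(\frac{t^2\lambda}{4}\big)^{\sigma-1}\frac{\Gamma(1-\sigma)}{i^{1-\sigma}}\,I_{1-\sigma}(\lambda,t)$. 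Multiplying by $\frac{1}{2\sigma}t^{1-2\sigma}$ and collecting constants — using $i^\sigma i^{1-\sigma}=i$, so $i^\sigma(-i)/i^{1-\sigma} = -i^{2\sigma}$ — yields exactly $-i^{2\sigma}\frac{\Gamma(1-\sigma)}{\sigma 4^\sigma\Gamma(\sigma)}\lambda^\sigma I_{1-\sigma}(\lambda,t)$. I expect the only genuine difficulty to lie in the second paragraph: choosing $\gamma$ and proving the arc estimates with enough uniformity in $t$ (and for the $t$-derivatives) to justify differentiating under the integral and integrating by parts with no boundary terms. Once that is set up, the differential equation, the initial value, and the $\partial_t^\sigma$-formula are all short, essentially algebraic, computations.
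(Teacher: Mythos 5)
Your proposal is correct, and it shares the paper's guiding idea --- sidestep the oscillatory half-axis into regions where one of the two exponential factors decays --- but the execution is genuinely different. The paper keeps the improper real-axis integral as its basic object: it splits at $s=1$, differentiates under the integral on $[1,\infty)$ after an integration by parts that leaves a boundary term at $s=1$, deforms the $[0,1]$ piece only slightly past the imaginary axis (angle $\tfrac{\pi}{2}+\delta$ after the substitution $z=is$) and recovers the equation in the limit $\delta\to 0$, where the two boundary terms cancel; the $\partial_t^\sigma$-identity is then obtained by the change of variable $r=\tfrac{t^2\lambda}{4s}$ applied directly to the truncated real integral. You instead fix one global contour $\gamma$ from $0$ (leaving through the upper half-plane, where $e^{-i\frac{t^2}{4s}\lambda}$ decays super-exponentially) to $\infty$ (through the lower half-plane, where $e^{-is}$ decays exponentially), show via the arc estimates of Lemma \ref{complexgamma} that the improper real-axis limit equals the absolutely convergent $\gamma$-integral, and then do everything --- differentiation under the integral, integration by parts with vanishing boundary terms, the inversion $s\mapsto\tfrac{t^2\lambda}{4s}$ --- on $\gamma$. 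Your route avoids the $s=1$ splitting, the boundary-term bookkeeping and the $\delta\to 0$ limit, and your initial-value bound (splitting at $s=t$) is more direct than the paper's integration by parts on $[1,\infty)$; the price is that you must actually write out the deformation from the real axis to $\gamma$, the locally-in-$t$ uniform domination of the $t$- and $s$-derivatives along $\gamma$, and, in the last step, that the inversion maps admissible contours to admissible contours (it swaps the two half-planes and the two endpoints, so the image is again of the allowed type and deforms back to the real axis, which the paper's real-axis change of variable sidesteps). These are exactly the points you flag, and they do go through; your constant bookkeeping in the final identity is also right. One small simplification: the integrand in your identity $\int_\gamma \phi\, s^{\sigma-1}\bigl(1-i\tfrac{1-\sigma}{s}-\tfrac{t^2\lambda}{4s^2}\bigr)\,ds=0$ is exactly $i\,\partial_s(\phi\, s^{\sigma-1})$, so the identity follows at once from the vanishing of $\phi\, s^{\sigma-1}$ at both ends of $\gamma$, with no need to integrate the middle term by parts; this is in effect the paper's identity \eqref{stderivative}.
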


\begin{proof}
  The proof is split into four parts.
  
  \textbf{Part I}:
  We consider the truncated integral
  \begin{equation}
  \label{integral}
    \int_\varepsilon^R e^{-is} e^{-i\frac{t^2}{4s}\lambda} \,\frac{ds}{s^{1-\sigma}} ,
  \end{equation}
  its convergence as $\varepsilon \to 0$ and $R\to\infty$, and to what extent it gives a solution to equation \eqref{WElambda}.
  Note first that the integral is absolutely convergent as $\varepsilon \to 0$.
  On the other hand, using integration by parts, we see that
  \begin{equation}
  \label{intbyparts}
  \begin{split}
    \int_\varepsilon^R e^{-is} e^{-i\frac{t^2}{4s}\lambda} \,\frac{ds}{s^{1-\sigma}}
    &= i \int_\varepsilon^R \partial_s (e^{-is}) e^{-i\frac{t^2}{4s}\lambda} \,\frac{ds}{s^{1-\sigma}} \\
    &= i \Big[ e^{-is} e^{-i\frac{t^2}{4s}\lambda} s^{\sigma -1} \Big]_{s=\varepsilon}^R
    -i\int_\varepsilon^R e^{-is} \partial_s (e^{-i\frac{t^2}{4s}\lambda} s^{\sigma - 1}) \,ds .
  \end{split}
  \end{equation}
  Here
  \begin{equation}
  \label{sderivative}
    \partial_s (e^{-i\frac{t^2}{4s}\lambda} s^{\sigma - 1}) = (i\frac{t^2}{4}\lambda s^{\sigma - 3} - (1-\sigma)s^{\sigma - 2})
    e^{-i\frac{t^2}{4s}\lambda}
  \end{equation}
  so that the last integral converges absolutely as $R\to\infty$. Since the integrated term
  converges as $R\to\infty$, we see that also the integral \eqref{integral} converges.
  
  We then study the convergence of the integral corresponding to the time derivatives, and calculate:
  \begin{equation}
  \label{tderivative}
    \Big( \partial_t^2 + \frac{1-2\sigma}{t}\partial_t \Big) e^{-i\frac{t^2}{4s}\lambda}
    = \Big( -\frac{t^2}{4s^2}\lambda^2 - i\frac{1-\sigma}{s} \lambda \Big) e^{-i\frac{t^2}{4s}\lambda} ,
  \end{equation}
  so that
  \begin{equation*}
    \int_\varepsilon^R e^{-is} \Big( \partial_t^2 + \frac{1-2\sigma}{t}\partial_t \Big) e^{-i\frac{t^2}{4s}\lambda}
    \,\frac{ds}{s^{1-\sigma}}
    = -\frac{t^2}{4}\lambda^2 \int_\varepsilon^R e^{-is} e^{-i\frac{t^2}{4s}\lambda} \,\frac{ds}{s^{3-\sigma}}
    -i(1-\sigma) \lambda \int_\varepsilon^R e^{-is} e^{-i\frac{t^2}{4s}\lambda} \,\frac{ds}{s^{2-\sigma}} ,
  \end{equation*}
  which converges absolutely as $R\to\infty$, but appears problematic for $\varepsilon\to 0$ (see Remark \ref{divrem}).
  In order to see to what extent the truncated integrals \eqref{integral} solve equation \eqref{WElambda}, we note from
  \eqref{sderivative} and \eqref{tderivative} that
  \begin{equation}
  \label{stderivative}
    \Big( \partial_t^2 + \frac{1-2\sigma}{t}\partial_t \Big) e^{-i\frac{t^2}{4s}\lambda} s^{\sigma - 1}
    = i\lambda \partial_s (e^{-i\frac{t^2}{4s}\lambda}s^{\sigma - 1}) .
  \end{equation}
  In the light of equations \eqref{intbyparts} and \eqref{stderivative}, we infer that
  \begin{align*}
    \Big( \partial_t^2 + \frac{1-2\sigma}{t}\partial_t \Big) 
    \int_\varepsilon^R e^{-is} e^{-i\frac{t^2}{4s}\lambda} \,\frac{ds}{s^{1-\sigma}}
    &= i\lambda \int_\varepsilon^R e^{-is} \partial_s (e^{-i\frac{t^2}{4s}\lambda} s^{\sigma - 1}) \,ds \\
    &= -\lambda \int_\varepsilon^R e^{-is} e^{-i\frac{t^2}{4s}\lambda} \,\frac{ds}{s^{1-\sigma}}
    + i\lambda \Big[ e^{-is} e^{-i\frac{t^2}{4s}\lambda} s^{\sigma -1} \Big]_{s=\varepsilon}^R .
  \end{align*}
  At the upper limit $R\to\infty$ the integrated term vanishes, but at the lower limit $\varepsilon\to 0$
  it diverges and a careful argument will be needed. 
  For now we record that
  \begin{equation*}
    \Big( \partial_t^2 + \frac{1-2\sigma}{t}\partial_t + \lambda \Big) 
    \int_1^\infty e^{-is} e^{-i\frac{t^2}{4s}\lambda} \,\frac{ds}{s^{1-\sigma}}
    = -i\lambda e^{-i} e^{-i\frac{t^2}{4}\lambda} .
  \end{equation*}
  Note here that the integral \eqref{integral} together with its time derivatives converges locally uniformly in $t$,
  which justifies differentiating under the integral sign.
  
  \textbf{Part II}:
  In order to show that
  \begin{equation}
  \label{zeroone}
    \Big( \partial_t^2 + \frac{1-2\sigma}{t}\partial_t + \lambda \Big) 
    \int_0^1 e^{-is} e^{-i\frac{t^2}{4s}\lambda} \,\frac{ds}{s^{1-\sigma}}
    = i\lambda e^{-i} e^{-i\frac{t^2}{4}\lambda} ,
  \end{equation}
  from which \eqref{WElambda} follows,
  we view the question in terms of complex path integrals:
  \begin{equation*}
    \int_0^1 e^{-is} e^{-i\frac{t^2}{4s}\lambda} \,\frac{ds}{s^{1-\sigma}} = i^{-\sigma}
    \int_0^1 e^{-(is)}e^{\frac{t^2}{4(is)}\lambda} \,\frac{d(is)}{(is)^{1-\sigma}}
    =i^{-\sigma} \lim_\gamma \int_\gamma e^{-z}e^{\frac{t^2}{4z}\lambda} \,\frac{dz}{z^{1-\sigma}} ,
  \end{equation*}
  where the paths $\gamma$ approximate the segment $[0,i]$ in a suitable way (which we elaborate on below).
  
  Let $0 < \delta < \frac{\pi}{2}$ be fixed and consider, for $0 < \varepsilon < 1$, the paths $\arc_\varepsilon$, $\arc_1$ and
  $\ray_\varepsilon$ given by
  \begin{equation*}
    \theta \mapsto \varepsilon e^{i\theta} \quad \textup{and} \quad \theta \mapsto e^{i\theta} , 
    \quad \frac{\pi}{2} \leq \theta \leq \frac{\pi}{2} + \delta , 
  \end{equation*}
  and
  \begin{equation*}
    s\mapsto se^{i(\frac{\pi}{2} + \delta)} , \quad \varepsilon \leq s \leq 1.
  \end{equation*}  
  By Cauchy's integral theorem we may now write
  \begin{equation*}
    \int_\varepsilon^1 e^{-is}e^{-i\frac{t^2}{4s}\lambda} \,\frac{ds}{s^{1-\sigma}}
    = i^{-\sigma} \Big( \int_{\arc_\varepsilon} + \int_{\ray_\varepsilon} - \int_{\arc_1} \Big)
    e^{-z} e^{\frac{t^2}{4z}\lambda} \,\frac{dz}{z^{1-\sigma}} .
  \end{equation*}
  
  To see that the first integral on the right-hand side vanishes as $\varepsilon \to 0$, it suffices to observe that
  $\real (e^{\pm i\theta}) = \cos \theta \leq 0$ when $\frac{\pi}{2} \leq \theta \leq \frac{\pi}{2} + \delta$ so that 
  \begin{equation*}
    |e^{-\varepsilon e^{i\theta}} e^{\frac{t^2}{4\varepsilon}\lambda e^{-i\theta}}|
    = e^{-\varepsilon \cos \theta} e^{\frac{t^2}{4\varepsilon}\lambda \cos \theta} \leq e^{-\varepsilon \cos\theta}
    \lesssim 1 .
  \end{equation*}
  
  The second integral on the right-hand side is
  \begin{equation*}
    \int_{\ray_\varepsilon} e^{-z} e^{\frac{t^2}{4z}\lambda} \,\frac{dz}{z^{1-\sigma}}
    = e^{i\sigma (\frac{\pi}{2} + \delta)} 
    \int_\varepsilon^1 e^{-se^{i(\frac{\pi}{2} + \delta)}} e^{\frac{t^2}{4s} \lambda e^{-i(\frac{\pi}{2} + \delta)}} 
    \,\frac{ds}{s^{1-\sigma}} ,
  \end{equation*}
  and it converges to a limit $J_\delta (\lambda , t)$ as $\varepsilon \to 0$, again because
  $\real (e^{-i(\frac{\pi}{2} + \delta)}) = \cos (\frac{\pi}{2} + \delta) < 0$.
  
  Lastly,  
  \begin{equation*}
    \int_{\arc_1} e^{-z} e^{\frac{t^2}{4z}\lambda} \,\frac{dz}{z^{1-\sigma}}
    = i \int_{\frac{\pi}{2}}^{\frac{\pi}{2} + \delta} e^{-e^{i\theta}} e^{\frac{t^2}{4} \lambda e^{-i\theta}} 
    e^{i\sigma\theta} \,d\theta
    =: E_\delta (\lambda , t),
  \end{equation*}
  so that altogether we have
  \begin{equation*}
    \int_0^1 e^{-is} e^{-i\frac{t^2}{4s}\lambda} \,\frac{ds}{s^{1-\sigma}} 
    = i^{-\sigma} (J_\delta(\lambda , t) - E_\delta (\lambda , t)) .
  \end{equation*}
  
  \textbf{Part III}:  
  For \eqref{zeroone} it suffices to show that
  \begin{equation*}
    \Big( \partial_t^2 + \frac{1-2\sigma}{t}\partial_t + \lambda \Big) E_\delta (\lambda , t) 
    \longrightarrow 0 , \quad \textup{as} \quad \delta \to 0
  \end{equation*}
  and that
  \begin{equation}
  \label{Jlimit}
    \Big( \partial_t^2 + \frac{1-2\sigma}{t}\partial_t + \lambda \Big) J_\delta (\lambda , t) 
    \longrightarrow i^{\sigma +1}\lambda e^{-i}e^{-i\frac{t^2}{4}\lambda} , \quad \textup{as} \quad \delta \to 0.
  \end{equation}
  
  It is easy to see that
  \begin{equation*}
    \Big| \Big( \partial_t^2 + \frac{1-2\sigma}{t}\partial_t + \lambda \Big) E_\delta (\lambda , t) \Big|
    \lesssim (t^2\lambda^2 + \lambda) \int_{\frac{\pi}{2}}^{\frac{\pi}{2}+\delta}
    e^{-\cos\theta} e^{\frac{t^2}{4}\lambda\cos\theta} \,d\theta \longrightarrow 0 , \quad \textup{as} \quad \delta\to 0.
  \end{equation*}
  
  Write $\theta_\delta = \frac{\pi}{2} + \delta$ so that
  \begin{equation*}
    J_\delta (\lambda , t) = e^{i\sigma \theta_\delta} 
    \int_0^1 e^{-se^{i\theta_\delta}} e^{\frac{t^2}{4s} \lambda e^{-i\theta_\delta}} \,\frac{ds}{s^{1-\sigma}} .
  \end{equation*}  
  Replacing $\lambda$ by $i\lambda e^{-i\theta_\delta}$ in \eqref{stderivative} and \eqref{intbyparts}, we find that
  \begin{align*}
    \Big( \partial_t^2 + \frac{1-2\sigma}{t}\partial_t \Big) J_\delta (\lambda , t)
    &= -e^{i(\sigma -1) \theta_\delta} \lambda \int_0^1 e^{-se^{i\theta_\delta}} 
    \partial_s (e^{\frac{t^2}{4s}\lambda e^{-i\theta_\delta}} s^{\sigma - 1}) \,ds \\
    &= -\lambda e^{i(\sigma -1) \theta_\delta} 
    \Big[ e^{-se^{i\theta_\delta}} e^{\frac{t^2}{4s}\lambda e^{-i\theta_\delta}} s^{\sigma -1} \Big]_{s=0}^1
    -\lambda J_\delta (\lambda , t) ,
  \end{align*}
  where the integrated term poses no problem at $s=0$ because
  $\real (e^{-i\theta_\delta}) = \cos \theta_\delta < 0$. 
  As $\delta\to 0$, the integrated term therefore tends to $\lambda i^{\sigma + 1} e^{-i}e^{-i\frac{t^2}{4}\lambda}$,
  and \eqref{Jlimit} and \eqref{zeroone} follow.

\textbf{Part IV}:
We address the convergence to the initial value. By Lemma \ref{complexgamma},
\begin{equation*}
  I_\sigma (\lambda , t ) - 1 = \frac{i^\sigma}{\Gamma (\sigma)} \lim_{\varepsilon , R} \int_\varepsilon^R
  e^{-is}(e^{-i\frac{t^2}{4s}\lambda} - 1) \,\frac{ds}{s^{1-\sigma}} .
\end{equation*}
The integral above is absolutely convergent as $\varepsilon \to 0$ and therefore by dominated convergence,
\begin{equation*}
  \int_0^1 e^{-is}(e^{-i\frac{t^2}{4s}\lambda} - 1) \,\frac{ds}{s^{1-\sigma}} \longrightarrow 0 \quad \textup{as} \quad t\to 0.
\end{equation*}
Using integration by parts, we see that
\begin{align*}
  \int_1^R e^{-is}(e^{-i\frac{t^2}{4s}\lambda} - 1) \,\frac{ds}{s^{1-\sigma}} &=
  i\int_1^R \partial_s (e^{-is}) (e^{-i\frac{t^2}{4s}\lambda} - 1) \,\frac{ds}{s^{1-\sigma}} \\
  &=i \Big[ e^{-is}(e^{-i\frac{t^2}{4s}\lambda} - 1) s^{\sigma - 1} \Big]_{s=1}^R
  -i \int_1^R e^{-is} \partial_s ( (e^{-i\frac{t^2}{4s}\lambda} - 1)s^{\sigma - 1}) \,ds ,
\end{align*}
where
\begin{equation*}
  \partial_s ( (e^{-i\frac{t^2}{4s}\lambda} - 1)s^{\sigma - 1})
  = i\frac{t^2}{4}\lambda e^{-i\frac{t^2}{4s}\lambda} s^{\sigma - 3} - (1-\sigma) (e^{-i\frac{t^2}{4s}\lambda} - 1) s^{\sigma - 2} ,
\end{equation*}
so at the limit $R\to\infty$ we have
\begin{align*}
  \int_1^\infty e^{-is}(e^{-i\frac{t^2}{4s}\lambda} - 1) \,\frac{ds}{s^{1-\sigma}}
  = &-ie^{-i}(e^{-i\frac{t^2}{4}\lambda} - 1) 
  + \frac{t^2}{4}\lambda \int_1^\infty e^{-is} e^{-i\frac{t^2}{4s}\lambda} \,\frac{ds}{s^{3-\sigma}} \\
  &+ i(1-\sigma) \int_1^\infty e^{-is}(e^{-i\frac{t^2}{4s}\lambda} - 1) \,\frac{ds}{s^{2-\sigma}} .
\end{align*}
The two integrals on the right-hand side converge absolutely and thus by dominated convergence we have
\begin{equation*}
  \int_1^\infty e^{-is}(e^{-i\frac{t^2}{4s}\lambda} - 1) \,\frac{ds}{s^{1-\sigma}} \longrightarrow 0
  \quad \textup{as} \quad t\to 0.
\end{equation*}
This shows that $I_\sigma (\lambda ,t) \to 1$ as $t\to 0$.

Performing a change of variables $\frac{t^2}{4s}\lambda = r$ in the truncated integral, we obtain
\begin{align*}
  \partial_t \int_\varepsilon^R e^{-is}e^{-i\frac{t^2}{4s}\lambda} \,\frac{ds}{s^{1-\sigma}}
  &=-i \frac{t}{2} \lambda \int_\varepsilon^R e^{-is} e^{-i\frac{t^2}{4s}\lambda} \,\frac{ds}{s^{2-\sigma}} \\
  &=-i \frac{4^{1-\sigma}}{2} \frac{\lambda^\sigma}{t^{1-2\sigma}} \int_{t^2\lambda / 4R}^{t^2\lambda / 4\varepsilon}
  e^{-i\frac{t^2}{4r}\lambda} \, e^{-ir} \,\frac{dr}{r^{1 - (1-\sigma)}} .
\end{align*}
Since the convergence of the last integral here is locally uniform in $t$, it is easy to verify that
\begin{align*}
  \partial_t^\sigma I_\sigma (\lambda , t) &= \frac{t^{1-2\sigma}}{2\sigma} \partial_t I_\sigma (\lambda , t) \\
  &= -\frac{i^{\sigma + 1}}{\Gamma (\sigma)} \frac{\lambda^\sigma}{\sigma 4^\sigma} 
  \lim_{\varepsilon' , R'} \int_{\varepsilon'}^{R'} e^{-i\frac{t^2}{4r}\lambda} \, e^{-ir} \,\frac{dr}{r^{1 - (1-\sigma)}} \\
  &= -i^{2\sigma} \frac{\Gamma (1-\sigma)}{\sigma 4^\sigma \Gamma (\sigma)} \lambda^\sigma I_{1-\sigma}(\lambda , t) .
\end{align*}

\end{proof}

\begin{remark}
\label{divrem}
  As $\varepsilon\to 0$, the integral
  \begin{equation*}
    \int_\varepsilon^R e^{-is} e^{-i\frac{t^2}{4s}\lambda} \,\frac{ds}{s^{k-\sigma}}
  \end{equation*}
  converges for $k=2$ but diverges for $k=3$. In both cases, this can be seen by means of the change of variable
  $r = \frac{t^2}{4s}\lambda$. For $k=2$, we also get bounds for the integral by means of Lemma \ref{firstestimate}.
\end{remark}

\begin{lemma}
\label{firstestimate}
  Let $0 < \sigma < 1$. For every $t>0$ we have
  \begin{equation}
  \label{noder}
    \Big| \int_\varepsilon^R e^{-is} e^{-i\frac{t^2}{4s}\lambda} \,\frac{ds}{s^{1-\sigma}} \Big| \lesssim 
    \begin{cases}
      1 , &\sigma\leq \frac{1}{2} , \\
      1 + \lambda^{\frac{\sigma}{2} - \frac{1}{4}} , &\sigma > \frac{1}{2} ,
    \end{cases}
  \end{equation}
  uniformly in $\varepsilon$ and $R$.
\end{lemma}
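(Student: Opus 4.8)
The plan is to reduce everything to the location of the stationary point of the phase and treat three regimes. Write the integral as $\int_\varepsilon^R e^{i\phi(s)}\,s^{\sigma-1}\,ds$ with the real phase $\phi(s) = -s - \frac{t^2\lambda}{4s}$, so that $\phi'(s) = -1 + \frac{t^2\lambda}{4s^2}$ vanishes at the single point $s_\lambda := \frac{t\sqrt\lambda}{2}$, while $\phi''(s) = -\frac{t^2\lambda}{2s^3}$, whence $|\phi''(s)| \sim s_\lambda^{-1}$ on $[s_\lambda/2,2s_\lambda]$. The case $t^2\lambda \le 1$ (which contains $\lambda = 0$ and essentially re-derives Lemma \ref{complexgamma}) is easy: on $[\varepsilon,\min(R,1)]$ the crude bound $\int_0^1 s^{\sigma-1}\,ds = 1/\sigma$ suffices, and on $[1,R]$ one has $\frac{t^2\lambda}{4s^2}\le\frac14$, hence $|\phi'|\ge\frac34$, so one integration by parts --- writing $e^{i\phi} = (i\phi')^{-1}(e^{i\phi})'$ --- turns the amplitude into something dominated by $s^{\sigma-2}$ (using $|\phi''|\lesssim s^{-3}$ there); the boundary term at $R$ vanishes because $\sigma<1$, the one at $1$ is $O(1)$, and the leftover integral converges since $\sigma<1$.

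For the main range $t^2\lambda\ge 1$, so that $s_\lambda\ge\frac12$, I would split $[\varepsilon,R]$ at the points $s_\lambda/2$ and $2s_\lambda$ and estimate the (at most three) resulting pieces separately. On $(0,s_\lambda/2]$ one has $\frac{t^2\lambda}{4s^2}\ge 4$, hence $|\phi'(s)|\ge 3$ and in fact $|\phi'(s)|\gtrsim t^2\lambda/s^2$; on $[2s_\lambda,\infty)$ one has $|\phi'(s)|\ge\frac34$. On each of these outer pieces the integration by parts above applies: the boundary contributions at $s_\lambda/2$ and $2s_\lambda$ are of order $s_\lambda^{\sigma-1}$, which is $\lesssim 1$ because $s_\lambda\ge\frac12$ and $\sigma<1$; the boundary term at $\varepsilon$ tends to $0$; and the resulting amplitude derivative is integrable on the relevant half-line (one checks it is $\lesssim s^{\sigma}/(t^2\lambda)$ near the origin and $\lesssim s^{\sigma-2}+s_\lambda^2 s^{\sigma-4}$ near infinity, both with finite integral since $\sigma<1$), so each outer piece is $\lesssim 1$. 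The middle piece $[s_\lambda/2,2s_\lambda]$ contains the stationary point, and there I would apply van der Corput's second-derivative estimate with amplitude $s^{\sigma-1}$, whose supremum and total variation over this interval are both $\sim s_\lambda^{\sigma-1}$, together with $|\phi''|\sim s_\lambda^{-1}$; this gives a bound $\lesssim s_\lambda^{1/2}\cdot s_\lambda^{\sigma-1} = s_\lambda^{\sigma-1/2}$. Since $s_\lambda^{\sigma-1/2} = (t/2)^{\sigma-1/2}\lambda^{\sigma/2-1/4}$, this contribution is $\lesssim 1$ when $\sigma\le\frac12$ (the exponent is nonpositive and $\lambda\ge t^{-2}$) and $\lesssim 1+\lambda^{\sigma/2-1/4}$ when $\sigma>\frac12$; adding up the pieces yields \eqref{noder}.

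The hard part is the middle piece: near $s_\lambda$ neither the decay of $e^{-it^2\lambda/(4s)}$ (useful only for small $s$) nor the fast oscillation of $e^{-is}$ (useful only for large $s$) is available, so one is forced through the second-derivative bound, and the power $\lambda^{\sigma/2-1/4}$ it produces is precisely the loss that later necessitates the hypothesis $f\in\dom(L^{\sigma/2-1/4})$ for $\sigma>\frac12$ in Theorem \ref{mainresult1}; the rest is bookkeeping, the only delicate point being to place the cut-points so that the integration-by-parts boundary terms stay bounded uniformly in $\varepsilon$ and $R$. Alternatively, in the spirit of Lemma \ref{complexgamma}, one could deform the contour $[\varepsilon,R]$ into the upper half-plane near the origin (where $e^{-it^2\lambda/(4s)}$ then decays) and into the lower half-plane for large $|s|$ (where $e^{-is}$ decays), matching the two parts near $s_\lambda$; this realizes the same stationary-phase picture through complex paths and gives the same bound, but the van der Corput route is the more economical to present here.
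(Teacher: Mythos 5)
Your argument is correct, and it follows the same stationary-phase skeleton as the paper's proof (phase $\varphi(s)=s+\frac{A^2}{s}$ with critical point at $A=\frac{t\sqrt\lambda}{2}$, three-piece splitting, integration by parts away from the critical point, uniformity in $\varepsilon$ and $R$ via the boundary terms), but the execution of the crucial middle piece is genuinely different. The paper cuts at $A\mp\frac{1}{2}\sqrt{A}$, so that the stationary window has length $\sim\sqrt{A}$ and the \emph{trivial} size estimate already yields $\sqrt{A}\cdot A^{\sigma-1}=A^{\sigma-\frac12}$; no oscillation is used near $s=A$, and the price is that the integration-by-parts boundary terms at $A\mp\frac12\sqrt{A}$ are only $\lesssim 1+A^{\sigma-\frac12}$ (the paper also controls the leftover integrals by observing that $\partial_s\bigl(\frac{s^{1+\sigma}}{s^2-A^2}\bigr)$ has constant sign, so its integral telescopes to the same boundary values). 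You instead cut at $s_\lambda/2$ and $2s_\lambda$, which makes the outer boundary terms $\lesssim s_\lambda^{\sigma-1}\lesssim 1$, but then the stationary window has length $\sim s_\lambda$ and the trivial bound would lose; you recover the correct power $s_\lambda^{\sigma-\frac12}$ by van der Corput's second-derivative test with $|\varphi''|\sim s_\lambda^{-1}$ and a monotone amplitude of size and variation $\sim s_\lambda^{\sigma-1}$. Both routes give the same $A^{\sigma-\frac12}=(t/2)^{\sigma-\frac12}\lambda^{\frac{\sigma}{2}-\frac14}$, hence the same ($t$-dependent for $\sigma>\frac12$) constant in \eqref{noder}; your small-parameter case ($t^2\lambda\le 1$ rather than $A<1$) is only a cosmetic shift of the threshold. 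What the paper's choice buys is a completely self-contained argument using nothing beyond integration by parts and monotonicity; what yours buys is cleaner bookkeeping at the cut points, at the cost of importing the van der Corput lemma as a black box. Your closing remark about contour deformation is indeed the mechanism the paper uses elsewhere (Lemma \ref{complexgamma} and the $I_1$ term in Lemma \ref{secondestimate}), though not in this particular proof.
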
  
  
  \begin{proof}
  We denote $A = \frac{t\sqrt{\lambda}}{2}$ and show that
  \begin{equation*}
    \Big| \int_\varepsilon^R e^{-i(s+\frac{A^2}{s})} \,\frac{ds}{s^{1-\sigma}} \Big|
    \lesssim
    \begin{cases}
      1 , &A < 1 , \\
      1 + A^{\sigma - \frac{1}{2}} , &A \geq 1 .
    \end{cases}
  \end{equation*}
  Observe that the phase $\varphi (s) = s + \frac{A^2}{s}$ has its critical 
  point where $0 = \varphi'(s) = 1 - \frac{A^2}{s^2}$ at $s = A$.
  
  Consider first the case $A < 1$.
  We write
  \begin{equation*}
    \Big( \int_\varepsilon^2 + \int_2^R \Big) e^{-i(s+\frac{A^2}{s})} \,\frac{ds}{s^{1-\sigma}} =: I_1 + I_2 ,
  \end{equation*}
  and note that $|I_1| \lesssim 1$. Using integration by parts, we see that
  \begin{equation*}
    I_2 = i\int_2^R \partial_s (e^{-i\varphi(s)}) \,\frac{ds}{\varphi'(s) s^{1-\sigma}} 
    = i \Big[ e^{-i\varphi(s)} \frac{s^{\sigma + 1}}{s^2 - A^2} \Big]_{s=2}^R
    -i \int_2^R e^{-i\varphi(s)} \partial_s \Big( \frac{s^{\sigma + 1}}{s^2 - A^2} \Big) \,ds .
  \end{equation*}
  Since $A^2 \leq 1$, we have
  \begin{equation*}
    \Big| \Big[ e^{-i\varphi(s)} \frac{s^{\sigma + 1}}{s^2 - A^2} \Big]_{s=2}^R \Big|
    \leq \frac{2^{\sigma + 1}}{3} + \frac{R^{\sigma + 1}}{R^2 - 1}
    \lesssim 1 
  \end{equation*}
  and upon calculating the derivative
  \begin{equation*}
    \partial_s \Big( \frac{s^{\sigma + 1}}{s^2 - A^2} \Big) = \frac{(\sigma + 1) s^\sigma}{s^2 - A^2}
    - \frac{2s^{\sigma + 2}}{(s^2 - A^2)^2}
  \end{equation*}
  we get
  \begin{equation*}
    \Big| \int_2^R e^{-i\varphi(s)} \partial_s \Big( \frac{s^{\sigma + 1}}{s^2 - A^2} \Big) \,ds \Big|
    \lesssim \int_2^\infty \frac{s^\sigma}{s^2 - A^2} ds + \int_2^\infty \frac{s^{\sigma + 2}}{(s^2 - A^2)^2} \,ds \lesssim 1 ,
  \end{equation*}
  so that $| I_2 | \lesssim 1$. 
  
  Consider then the case $A\geq 1$.
  We follow the principle of stationary phase and decompose the integral into three pieces
  \begin{equation*}
    \Big( \int_\varepsilon^{A - \frac{1}{2}\sqrt{A}} + \int_{A - \frac{1}{2}\sqrt{A}}^{A + \frac{1}{2}\sqrt{A}}
     + \int_{A + \frac{1}{2}\sqrt{A}}^R \Big)
    e^{-i(s+\frac{A^2}{s})} \,\frac{ds}{s^{1-\sigma}} =: I_1 + I_2 + I_3 ;
  \end{equation*}
  observe that $A\pm \frac{1}{2}\sqrt{A} \sim A$.
  
  Note first that
  \begin{equation*}
    |I_2| \leq \int_{A - \frac{1}{2}\sqrt{A}}^{A + \frac{1}{2}\sqrt{A}} \,\frac{ds}{s^{1-\sigma}}
    \leq \frac{\sqrt{A}}{(A - \frac{1}{2}\sqrt{A})^{1 - \sigma}} \lesssim A^{\sigma - \frac{1}{2}}. 
  \end{equation*}
  For $I_1 + I_3$ we use integration by parts:
  \begin{align*}
    I_1 + I_3 &= \Big( \int_\varepsilon^{A-\frac{1}{2}\sqrt{A}} + \int_{A+\frac{1}{2}\sqrt{A}}^R \Big)
    e^{-i\varphi(s)} \,\frac{ds}{s^{1-\sigma}} \\
    &=i \Big( \int_\varepsilon^{A-\frac{1}{2}\sqrt{A}} + \int_{A+\frac{1}{2}\sqrt{A}}^R \Big)
    \partial_s (e^{-i\varphi(s)}) \,\frac{ds}{\varphi'(s)s^{1-\sigma}} \\
    &=i\Big[ e^{-i\varphi(s)} \frac{s^{\sigma + 1}}{s^2 -A^2} \Big]_{s=\varepsilon}^{A-\frac{1}{2}\sqrt{A}}
    +i\Big[ e^{-i\varphi(s)} \frac{s^{\sigma + 1}}{s^2 -A^2} \Big]_{s=A+\frac{1}{2}\sqrt{A}}^R \\
    &-i \Big( \int_\varepsilon^{A-\frac{1}{2}\sqrt{A}} + \int_{A+\frac{1}{2}\sqrt{A}}^R \Big) e^{-i\varphi(s)} 
    \partial_s \Big( \frac{s^{1+\sigma}}{s^2 - A^2} \Big) \,ds .
  \end{align*}
  Here
  \begin{equation*}
    \Big| \Big[ e^{-i\varphi(s)} \frac{s^{\sigma + 1}}{s^2 -A^2} \Big]_{s=\varepsilon}^{A-\frac{1}{2}\sqrt{A}} \Big|
    \leq \frac{(A-\frac{1}{2}\sqrt{A})^{\sigma + 1}}{A^2 - (A-\frac{1}{2}\sqrt{A})^2} 
    + \frac{\varepsilon^{\sigma + 1}}{A^2 - \varepsilon^2} \lesssim 1 + A^{\sigma - \frac{1}{2}} .
  \end{equation*}
  and similarly
  \begin{equation*}
    \Big| \Big[ e^{-i\varphi(s)} \frac{s^{\sigma + 1}}{s^2 -A^2} \Big]_{s=A+\frac{1}{2}\sqrt{A}}^R \Big|
    \leq \frac{(A+\frac{1}{2}\sqrt{A})^{\sigma + 1}}{(A+\frac{1}{2}\sqrt{A})^2 - A^2} + \frac{R^{\sigma + 1}}{R^2 - A^2}
    \lesssim 1 + A^{\sigma - \frac{1}{2}} . 
  \end{equation*}
  Moreover, because
  \begin{equation*}
    \partial_s \Big( \frac{s^{1+\sigma}}{s^2 - A^2} \Big) 
    = \frac{(s^2 - A^2)(1+\sigma)s^\sigma - 2s^{\sigma + 2}}{(s^2 - A^2)^2}
    = \frac{s^{\sigma + 2}(\sigma - 1) - A^2(1+\sigma)s^\sigma}{(s^2 - A^2)^2} < 0,
  \end{equation*}
  we have
  \begin{align*}
    \Big| \Big( \int_\varepsilon^{A-\frac{1}{2}\sqrt{A}} + \int_{A+\frac{1}{2}\sqrt{A}}^R \Big) e^{-i\varphi(s)} 
    \partial_s \Big( \frac{s^{1+\sigma}}{s^2 - A^2} \Big) \,ds \Big|
    &\leq - \Big( \int_\varepsilon^{A-\frac{1}{2}\sqrt{A}} + \int_{A+\frac{1}{2}\sqrt{A}}^R \Big) 
    \partial_s \Big( \frac{s^{1+\sigma}}{s^2 - A^2} \Big) \,ds \\
    &= -\Big[ \frac{s^{1+\sigma}}{s^2 - A^2} \Big]_{s=\varepsilon}^{A-\frac{1}{2}\sqrt{A}}
    -\Big[ \frac{s^{1+\sigma}}{s^2 - A^2} \Big]_{s=A+\frac{1}{2}\sqrt{A}}^R \\
    &\lesssim 1 + A^{\sigma - \frac{1}{2}} , 
  \end{align*}
  as before.
  Therefore
  \begin{equation*}
    |I_1 + I_3| \lesssim 1 + A^{\sigma - \frac{1}{2}} .
  \end{equation*}
\end{proof}


\begin{lemma}
\label{secondestimate}
  Let $0 < \sigma < 1$. For every $t>0$ we have
  \begin{equation}
  \label{1stder}
    \Big| \Big( \partial_t^2 + \frac{1-2\sigma}{t}\partial_t \Big) I_\sigma (\lambda , t) \Big| 
    \lesssim \max \{ 1, \lambda^{\frac{\sigma}{2} + \frac{3}{4}} \} .
  \end{equation}
\end{lemma}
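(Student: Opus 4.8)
The plan is to reduce the estimate to a pointwise bound on $I_\sigma$ itself by invoking the differential equation of Proposition~\ref{lambdaeq}. Indeed, \eqref{WElambda} reads
\[
  \Big(\partial_t^2 + \frac{1-2\sigma}{t}\partial_t\Big)I_\sigma(\lambda,t) = -\lambda\, I_\sigma(\lambda,t),
\]
so the left-hand side of \eqref{1stder} is exactly $\lambda\,|I_\sigma(\lambda,t)|$. It therefore suffices to prove, for each fixed $t>0$, that $|I_\sigma(\lambda,t)|\lesssim 1$ when $0\le\lambda\le1$ and $|I_\sigma(\lambda,t)|\lesssim\lambda^{\frac\sigma2-\frac14}$ when $\lambda\ge1$: multiplying by $\lambda$ then gives $\lambda$ and $\lambda^{\frac\sigma2+\frac34}$ respectively, both of which are at most $\max\{1,\lambda^{\frac\sigma2+\frac34}\}$ on the relevant range.

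The bound for $0\le\lambda\le1$, and the bound for $\lambda\ge1$ in the case $\sigma>\frac12$, follow at once from Lemma~\ref{firstestimate}. For $0\le\lambda\le1$: if $\sigma\le\frac12$ the estimate $|I_\sigma|\lesssim1$ is the content of that lemma, while if $\sigma>\frac12$ then $\lambda^{\frac\sigma2-\frac14}\le1$, so again $1+\lambda^{\frac\sigma2-\frac14}\lesssim1$. For $\lambda\ge1$ with $\sigma>\frac12$ one has $\lambda^{\frac\sigma2-\frac14}\ge1$, hence $1+\lambda^{\frac\sigma2-\frac14}\lesssim\lambda^{\frac\sigma2-\frac14}$.

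The only substantial case is $\sigma\le\frac12$ with $\lambda\ge1$, where one must extract genuine decay $\lambda^{\frac\sigma2-\frac14}$ (a negative power, for $\sigma<\frac12$) from the oscillatory integral; the crude bound $|I_\sigma|\lesssim1$ is not enough. For this I would sharpen the stationary-phase argument already present in the proof of Lemma~\ref{firstestimate}. Setting $A=\tfrac{t\sqrt\lambda}{2}$ and rescaling $s=Au$ turns the defining integral into $A^\sigma\int e^{-iA(u+1/u)}u^{\sigma-1}\,du$, whose phase $u+1/u$ has a single nondegenerate critical point at $u=1$. On the window $u\sim1$ the second-derivative (van der Corput) estimate against the $C^1$ amplitude $u^{\sigma-1}$ produces a factor $A^{-1/2}$; on the two tails $u$ small and $u$ large, integration by parts against $u+1/u$ — whose derivative stays bounded away from $0$ there — produces a factor $A^{-1}$, with harmless endpoint contributions since $u^{\sigma+1}/(u^2-1)\to0$ as $u\to0$ and $u^{\sigma-1}/(1-u^{-2})\to0$ as $u\to\infty$ (using $0<\sigma<1$). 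This is precisely what the $A\ge1$ part of the proof of Lemma~\ref{firstestimate} carries out, once one notes that each of its three pieces is individually $O(A^{\sigma-1/2})$ — the additive $1$ in the statement of that lemma being an inessential simplification. Hence $|I_\sigma(\lambda,t)|\lesssim A^{\sigma-1/2}=(t/2)^{\sigma-1/2}\lambda^{\frac\sigma2-\frac14}$ whenever $A\ge1$; and for the bounded range $1\le\lambda<4/t^2$ (which can occur only if $t<2$) the trivial bound $|I_\sigma|\lesssim1$ already dominates $\lambda^{\frac\sigma2-\frac14}$ up to a constant depending on $t$. Combining the cases completes the proof.

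I expect this last step — converting oscillation into an honest negative power of $\lambda$ rather than mere boundedness — to be the main obstacle. The balance is the classical one: the critical window of length $\sim\sqrt A$ contributes $\sim\sqrt A\cdot A^{\sigma-1}=A^{\sigma-1/2}$, and one must verify that integration by parts on the non-stationary part, in particular near the endpoint $s\to0^+$, never overshoots this order.
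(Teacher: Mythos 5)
Your proposal is correct, but it follows a genuinely different route from the paper's. You reduce \eqref{1stder} to a bound on $I_\sigma$ itself via the differential equation \eqref{WElambda}; since Proposition \ref{lambdaeq} is proved before and independently of Lemma \ref{secondestimate}, this is not circular, and the left-hand side is indeed $\lambda\,|I_\sigma(\lambda,t)|$. The only new input you then need is the sharpened form of Lemma \ref{firstestimate}, namely $|I_\sigma(\lambda,t)|\lesssim A^{\sigma-\frac12}$ for $A=\frac{t\sqrt\lambda}{2}\ge 1$ and all $0<\sigma<1$, and your reading of that proof is accurate: in the case $A\ge1$ the middle piece, the integrated boundary terms and the monotone derivative terms are each $O(A^{\sigma-\frac12})$ uniformly in $\varepsilon,R$, the additive $1$ in the statement being relevant only for $A<1$; your van der Corput sketch gives the same bound independently. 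The paper, by contrast, never invokes the ODE: it estimates the time derivatives of the oscillatory integral directly, splitting at $t$-independent points around the stationary point so that $\partial_t$ and $\partial_t^2$ can be moved inside the integral on the middle and outer pieces, and treating the piece near $s=0$ --- where differentiating under the integral sign produces an integral that diverges at the origin (cf. Remark \ref{divrem}) --- by deforming onto the ray of argument $\frac{2\pi}{3}$ plus an arc to gain exponential decay. What the paper's longer argument buys is estimates that are locally uniform in $t$ together with an explicit justification for differentiating under the integral, which is what is really used (jointly with Proposition \ref{lambdaeq}) when the derivatives are taken inside the spectral integral in the proof of Theorem \ref{mainresult1}; your argument proves the stated inequality more economically, and the locally uniform control of the separate derivatives could anyway be recovered from the identity $\partial_t^\sigma I_\sigma = -i^{2\sigma}\frac{\Gamma(1-\sigma)}{\sigma 4^\sigma\Gamma(\sigma)}\lambda^\sigma I_{1-\sigma}$ of Proposition \ref{lambdaeq} combined with Lemma \ref{firstestimate}. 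Finally, your constants depend on $t$ (through $(t/2)^{\sigma-\frac12}$ and the exceptional range $1\le\lambda<4/t^2$); this is consistent with the lemma's ``for every $t>0$'' formulation, and the constants implicit in the paper's own proof are not $t$-uniform either.
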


\begin{proof}
In order to obtain estimates for the derivatives
\begin{equation*}
  \Big( \partial_t^2 + \frac{1-2\sigma}{t}\partial_t \Big) 
  \int_0^\infty e^{-i(s+\frac{t^2}{4s}\lambda )} \,\frac{ds}{s^{1-\sigma}} ,
\end{equation*}
we now split the interval of integration in a way which does not
depend on $t$. We shall  prove the estimate for $t$ in a small
neighbourhood of  a fixed point $t_0>0$,
by moving the differentiations into the integrals. 
This will be uniform in  $t_0>0$.
With  $A = {t\sqrt{\lambda}}/{2}$ as before,
 we write  $A_0$ for the value corresponding to $t_0$.
 Let $d = \min(A_0, \sqrt{A_0})/2$. The integral will be
split at the points  $a = A_0- d$ and  $b = A_0+ d$. We consider only
values of 
  $t$ so close to  $t_0$ that $|A-A_0|<d/2$. Thus
$|s-A|> d/2 \sim\min(A, \sqrt{A}) $ as soon as
$s\notin (a,b)$. 

 For such  $t$  we write
  \begin{equation*}
    \Big( \partial_t^2 + \frac{1-2\sigma}{t}\partial_t \Big) \Big( \int_0^{a} + \int_{a}^{b} 
    + \int_{b}^\infty \Big)
    e^{-i(s+\frac{t^2}{4s}\lambda)} \,\frac{ds}{s^{1-\sigma}} = I_1 + I_2 + I_3 ,
  \end{equation*}
say.
In $I_2$ and  $I_3$, it is clear that the derivatives can be taken inside the integrals:
  \begin{align*}
     &\Big( \partial_t^2 + \frac{1-2\sigma}{t}\partial_t \Big) 
    \Big( \int_{a}^{b} + \int_{b}^\infty \Big) e^{-i(s+\frac{t^2}{4s}\lambda)} \,\frac{ds}{s^{1-\sigma}}
  \\  &= -i(1-\sigma) \lambda \Big( \int_{a}^{b} + \int_{b}^\infty \Big) e^{-i(s+\frac{A^2}{s})} \,\frac{ds}{s^{2-\sigma}} 
    - \frac{t^2\lambda^2}{4} \Big( \int_{a}^{b} + \int_{b}^\infty \Big) e^{-i(s+\frac{A^2}{s})} \,\frac{ds}{s^{3-\sigma}}.
  \end{align*}

To estimate  $I_2$, we observe that $s\sim A$ for $a\le s\le b$ and that 
$b-a =2d \lesssim \sqrt A \sim \lambda^{1/4}$. Thus 
  \begin{equation*}
    |I_2| \leq \lambda \int_{a}^{b} \,\frac{ds}{s^{2-\sigma}}
    + t^2 \lambda^2 \int_{a}^{b} \,\frac{ds}{s^{3-\sigma}}
    \leq \lambda \,\frac{b-a}{A^{2-\sigma}} + t^2\lambda^2\, \frac{b-a}{A^{3-\sigma}}
    \lesssim \lambda^{\frac{\sigma}{2} + \frac{1}{4}} + \lambda^{\frac{\sigma}{2} + \frac{3}{4}}.
  \end{equation*}

 For $I_3$ the estimate is clear when $A \le  1$:
  \begin{align*}
    |I_3| &\leq \lambda \int_b^\infty \,\frac{ds}{s^{2-\sigma}} 
    + t^2\lambda^2 \int_b^\infty \,\frac{ds}{s^{3-\sigma}} \\
    &\lesssim \lambda A^{\sigma - 1} + \lambda^2 A^{\sigma - 2} 
    \leq \lambda A^{\sigma - \frac{3}{2}} + \lambda^2 A^{\sigma - \frac{5}{2}} \\
   & \lesssim \lambda^{\frac{\sigma}{2} + \frac{1}{4}} + 
\lambda^{\frac{\sigma}{2} + \frac{3}{4}} 
  \lesssim \max(1,\lambda^{\frac{\sigma}{2} + \frac{3}{4}}).
  \end{align*}

 When $A > 1$ we  integrate by parts, with $k=2,3$:
  \begin{align*}
    \int_{b}^\infty e^{-i\varphi(s)} \,\frac{ds}{s^{k-\sigma}}
    &=i \int_{b}^\infty \partial_s (e^{-i\varphi(s)}) \,\frac{ds}{\varphi'(s)s^{k-\sigma}} \\
    &=i\Big[ e^{-i\varphi(s)} \frac{s^{\sigma - k + 2}}{s^2 -A^2} \Big]_{s=b}^\infty 
    -i \int_{b}^\infty e^{-i\varphi(s)} \partial_s \Big( \frac{s^{\sigma - k + 2}}{s^2 - A^2} \Big) \,ds .
  \end{align*}
  Here
  \begin{equation*}
    \Big| \Big[ e^{-i\varphi(s)} \frac{s^{\sigma - k + 2}}{s^2 -A^2} \Big]_{s=b}^\infty \Big|
    = \frac{b^{\sigma - k + 2}}{b^2 - A^2} 
    \sim \frac{A^{\sigma - k + 2}}{A \sqrt{A}}    
    \lesssim A^{\sigma - k + \frac{1}{2}} .
  \end{equation*}
  Moreover, since by simple calculus,
  \begin{align*}
    \partial_s \Big( \frac{s^{\sigma - k + 2}}{s^2 - A^2} \Big) 
    &= \frac{(s^2 - A^2)(\sigma - k + 2) s^{\sigma - k + 1} - 2s^{\sigma -k + 3}}{(s^2 - A^2)^2} \\
    &= \frac{(\sigma - k)s^{\sigma - k + 3} - A^2(\sigma - k + 2)  s^{\sigma - k + 1}}{(s^2 - A^2)^2} < 0
  \end{align*}
  for all $s > A$, we have

  \begin{align*}
    \Big| \int_{b}^\infty  e^{-i\varphi(s)} 
    \partial_s \Big( \frac{s^{\sigma - k + 2}}{s^2 - A^2} \Big) \,ds \Big|
    \leq - \int_{b}^\infty  \partial_s \Big( \frac{s^{\sigma - k + 2}}{s^2 - A^2} \Big) \,ds
 = -\Big[ \frac{s^{\sigma - k + 2}}{s^2 - A^2} \Big]_{s=b}^\infty 
    \lesssim A^{\sigma - k + \frac{1}{2}} , 
 \end{align*}
as before.
  Therefore
  \begin{align*}
    |I_3| &\leq \lambda \Big| \int_{b}^\infty e^{-i\varphi(s)} \frac{ds}{s^{2-\sigma}} \Big|
    + t^2\lambda^2 \Big| \int_{b}^\infty e^{-i\varphi(s)} \,\frac{ds}{s^{3-\sigma}} \Big| \\
    &\lesssim \lambda A^{\sigma -\frac{3}{2}} + \lambda^2 A^{\sigma -\frac{5}{2}} 
   \lesssim \max(1,\lambda^{\frac{\sigma}{2} + \frac{3}{4}}), 
  \end{align*}
as before.

 We handle $I_1$ by switching to complex path integrals as in the proof of Proposition \ref{lambdaeq}, namely
  \begin{equation*}
    \int_0^{a} e^{-i(s+\frac{t^2}{4s}\lambda)} \,\frac{ds}{s^{1-\sigma}}
  = i^{-\sigma} \int_{[0,ia]} e^{-z+\frac{A^2}z}\,\frac{dz}{z^{1-\sigma}}
    = i^{-\sigma} \Big( \int_{\ray_a} - \int_{\arc_a} \Big) e^{-z+\frac{A^2}{z}} \,\frac{dz}{z^{1-\sigma}} ,
  \end{equation*}
  where
  \begin{equation*}
    \ray_a: \quad s\mapsto se^{i\frac{2\pi}{3}} , \quad 0 < s \leq a,
  \end{equation*}
  and
  \begin{equation*}
    \arc_a: \quad \theta \mapsto a e^{i\left(\theta+\frac\pi 2\right)} , 
\quad 0 \leq \theta \leq \frac{\pi}{6} . 
  \end{equation*}

In the integrals over  $\ray_a$ and  $\arc_a$, we shall take the
derivatives of the integrand and verify convergence of the resulting
integrals. Observe first that for $0 \leq \theta \leq {\pi}/{6}$
  \begin{equation*}
    |e^{-a e^{i(\theta+\pi/2)} + \frac{A^2}{a}e^{-i(\theta+\pi/2)}}| 
    = e^{-\left(-a + \frac{A^2}a\right) \sin \theta},   
     \end{equation*}
and here
\[
-s+\frac{A^2}s =  2(A-s) + \frac{(A-s)^2}s \gtrsim d + \frac{d^2}s.
\]
For  $\theta = {\pi}/{6}$ this implies that 
 \begin{equation*}
    |e^{-se^{i{2\pi}/{3}} + \frac{A^2}{s} e^{-i{2\pi}/{3}}}|
 = e^{-\frac1{2}(-s+\frac{A^2}{s})}
 \lesssim e^{-c\sqrt A} \, e^{-c\,\frac{\min(A^2, A)}s} .
  \end{equation*}

 With $k=2,3$, we get for $A \le 1$
\begin{equation*}
   \int_{\ray_a}  \Big| e^{-z + \frac{A^2}{z}} \,\frac{1}{z^{k-\sigma}} \Big|\,{dz}
   \lesssim \int_0^{a}   e^{-c\,\frac{A^2}s}  \,\frac{ds}{s^{k-\sigma}} 
 \lesssim A^{2\sigma-2k+2};
\end{equation*}
and for  $A > 1$
\begin{equation*}
   \int_{\ray_a}  \Big| e^{-z + \frac{A^2}{z}} \,\frac{1}{z^{k-\sigma}} \Big|\,{dz}
   \leq e^{-c\sqrt A} \int_0^{a}   e^{-c\,\frac{A}s}  \,\frac{ds}{s^{k-\sigma}} 
 \lesssim A^{-M},
\end{equation*}
for any $M$.

We also get
\begin{equation*}
    \Big| \int_{\arc_a} e^{-z + \frac{A^2}{z}} \,\frac{dz}{z^{k-\sigma}} \Big| 
    \lesssim A^{\sigma - k + 1} \int_0^{\frac{\pi}{6}} e^{-c\sqrt{A}\,\theta} \,d\theta
    \lesssim A^{\sigma - k + \frac{1}{2}},
  \end{equation*}
for all $A$.

  Altogether, this implies that
  \begin{align*}
    |I_1| &\leq \lambda \Big| \Big( \int_{\ray_a} - \int_{\arc_a} \Big) e^{-z+\frac{A^2}{z}} \,\frac{dz}{z^{2-\sigma}} \Big|
    + t^2\lambda^2 \Big| \Big( \int_{\ray_a} - \int_{\arc_a} \Big) e^{-z+\frac{A^2}{z}} \,\frac{dz}{z^{3-\sigma}} \Big| \\
    &\lesssim \lambda \max(A^{2\sigma - 2}, A^{\sigma - \frac{3}{2}}) 
+ \lambda^2  \max(A^{2\sigma - 4},  A^{\sigma - \frac{5}{2}}) \\
    &\lesssim  \max(1,\lambda^{\frac{\sigma}{2} + \frac{3}{4}}).
  \end{align*}

\end{proof}

\begin{proof}[Proof of Theorem \ref{mainresult1}]
  Assume first that
  \begin{equation*}
    f\in 
    \begin{cases}
      \mathcal{L}^2 , &\sigma\leq \frac{1}{2} , \\
      \dom (L^{\frac{\sigma}{2} - \frac{1}{4}}) , &\sigma > \frac{1}{2} ,
    \end{cases}
  \end{equation*}
  and denote by $E$ the spectral measure of $L$.
  In order to see that the family
  \begin{equation*}
    u_{\varepsilon , R}(\cdot , t) = \frac{i^\sigma t^{2\sigma}}{4^\sigma \Gamma (\sigma)}
    \int_\varepsilon^R e^{-i\frac{t^2}{4s}} \, e^{-isL}f \,\frac{ds}{s^{1+\sigma}}
  \end{equation*}
  converges weakly in $\mathcal{L}^2$ as $\varepsilon\to 0$ and $R\to\infty$ note first that for any $h\in\mathcal{L}^2$,
  \begin{align*}
    \la u_{\varepsilon , R}(\cdot ,t) , h \ra
    &= \frac{i^\sigma t^{2\sigma}}{4^\sigma \Gamma (\sigma)}
    \int_\varepsilon^R e^{-i\frac{t^2}{4s}} \int_0^\infty e^{-is\lambda} \,dE_{f,h}(\lambda) \,\frac{ds}{s^{1+\sigma}} \\
    &= \frac{i^\sigma t^{2\sigma}}{4^\sigma \Gamma (\sigma)}
    \int_0^\infty \int_\varepsilon^R e^{-i\frac{t^2}{4s}} \, e^{-is\lambda} \,\frac{ds}{s^{1+\sigma}} \,dE_{f,h}(\lambda)  \\
    &= \frac{i^\sigma}{\Gamma (\sigma)} \int_0^\infty \int_{t^2/4R}^{t^2/4\varepsilon} e^{-is} e^{-i\frac{t^2}{4s}\lambda}
    \,\frac{ds}{s^{1-\sigma}} \,dE_{f,h}(\lambda) .
  \end{align*}
  Here, by Lemma \ref{firstestimate},
  \begin{equation*}
    \Big| \int_{t^2/4R}^{t^2/4\varepsilon} e^{-is} e^{-i\frac{t^2}{4s}\lambda} \,\frac{ds}{s^{1-\sigma}} \Big|
    \lesssim \begin{cases}
      1 , &\sigma\leq \frac{1}{2} , \\
      1 + \lambda^{\frac{\sigma}{2} - \frac{1}{4}} , &\sigma > \frac{1}{2} ,
    \end{cases}
  \end{equation*}
  from which the convergence follows. 
  
  
  Assume then that $f\in\dom (L^{\frac{\sigma}{2} + \frac{3}{4}})$. 
  By Lemma \ref{secondestimate} and Proposition \ref{lambdaeq}, we have
  \begin{align*}
    \Big( \partial_t^2 + \frac{1-2\sigma}{t} \partial_t \Big) \la u(\cdot , t) , h \ra
    &= \int_0^\infty \Big( \partial_t^2 + \frac{1-2\sigma}{t} \partial_t \Big) I_\sigma (\lambda , t) \,dE_{f,h}(\lambda) \\
    &=- \int_0^\infty \lambda I_\sigma (\lambda , t) \,dE_{f,h}(\lambda) \\
    &= -\la Lu(\cdot , t) , h \ra .
  \end{align*}
  
  Weak convergence to the initial data,
  \begin{equation*}
    \la u(\cdot , t), h \ra = \int_0^\infty I_\sigma (\lambda , t) \,dE_{f,h}(\lambda) \longrightarrow \la f , h \ra , \quad
    \textup{as} \quad t\to 0,
  \end{equation*}
  holds by Proposition \ref{lambdaeq} and Lemma \ref{firstestimate}.
  
  Moreover, taking into account that
  \begin{equation*}
    |I_{1-\sigma}(\lambda , t)| \lesssim
    \begin{cases}
      1 , &\sigma \geq \frac{1}{2},\\
      1 + \lambda^{\frac{1}{4} - \frac{\sigma}{2}}, &\sigma < \frac{1}{2} ,
    \end{cases}
  \end{equation*}
  we have, by Proposition \ref{lambdaeq}, that 
  \begin{align*}
    \partial_t^\sigma \la u(\cdot , t) , h \ra
    &= \int_0^\infty \partial_t^\sigma I_\sigma (\lambda , t) \,dE_{f,h}(\lambda) \\
    &= -i^{2\sigma} \frac{\Gamma (1-\sigma)}{\sigma 4^\sigma \Gamma (\sigma)} 
    \int_0^\infty \lambda^\sigma I_{1-\sigma}(\lambda , t) \,dE_{f,h}(\lambda) \\
    &\longrightarrow -i^{2\sigma} \frac{\Gamma (1-\sigma)}{\sigma 4^\sigma \Gamma (\sigma)}
    \la L^\sigma f , h \ra ,
  \end{align*}
  as $t\to 0$.
\end{proof}

\begin{remark}
  Note that the proof of Theorem \ref{mainresult1} entails the fixed-time norm estimate
  \begin{equation*}
    \| u(\cdot , t) \|_2
    \lesssim 
    \begin{cases}
      \| f \|_2 , &\sigma\leq \frac{1}{2} , \\
      \| f \|_2 + \| L^{\frac{\sigma}{2} - \frac{1}{4}}f \|_2 , &\sigma > \frac{1}{2} ;
    \end{cases}
  \end{equation*}
  cf. Theorem \ref{fixedtime}.
\end{remark}

\section{Kernel representations in the case of the Laplacian}
\label{sectionkernel}

Let $0 < \sigma < 1$ and consider the Neumann problem
\begin{equation}
\label{Neumannproblem}
  \begin{cases}
    \partial_t^2 u + \frac{1-2\sigma}{t}\partial_t u = \Delta u \\
    u(\cdot , 0) = 0 , \;\, \partial_t^\sigma u(\cdot , 0) = g ,
  \end{cases}
\end{equation}
for Schwartz initial data $g$ on $\R^d$. In this section we prove the following three results:

\begin{theorem}
\label{kernel123}
  Suppose that $d\in \{ 1,2,3 \}$ and let $0<\sigma < 1$ be such that $0 < \frac{d}{2} - \sigma < 1$. 
  A solution to the Neumann problem \eqref{Neumannproblem} with Schwartz initial data $g$ is given by
  \begin{equation*}
  u(x,t) = \frac{\sigma \sin((\frac{d}{2}-\sigma)\pi) \Gamma (\frac{d}{2} - \sigma)}{\pi^{\frac{d}{2}}\sin(\sigma\pi)\Gamma (1-\sigma)}
  \int_{B(x,t)} \frac{g(y)}{(t^2-|x-y|^2)^{\frac{d}{2} - \sigma}} \,dy .
  \end{equation*}
\end{theorem}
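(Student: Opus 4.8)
The plan is to reduce the theorem to an explicit evaluation of an oscillatory integral. By Corollary~\ref{corollary} (assuming $g$ real, which suffices by linearity since the asserted formula is $\C$-linear in $g$) the solution of \eqref{Neumannproblem} is
\[
u(\cdot ,t) = -\,\frac{\sigma 4^\sigma\Gamma (\sigma)}{\sin (\sigma\pi)\,\Gamma (1-\sigma)}\,\imag\big(U_t^\sigma ((-\Delta )^{-\sigma}g)\big),
\]
and for Schwartz $g$ the hypothesis of that corollary holds because the exponent $-\frac{\sigma}{2}+\frac{3}{4}$ is positive, so $g\in\dom ((-\Delta )^{-\frac{\sigma}{2}+\frac{3}{4}})$. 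Starting from $I_\sigma (\lambda ,t)$ in Proposition~\ref{lambdaeq}, the substitution $s\mapsto\lambda s$ together with the spectral theorem give $U_t^\sigma ((-\Delta )^{-\sigma}g)=\frac{i^\sigma}{\Gamma (\sigma)}\int_0^\infty e^{-i\frac{t^2}{4s}}e^{is\Delta}g\,\frac{ds}{s^{1-\sigma}}$, and inserting the free Schr\"odinger kernel $\frac{1}{(4\pi is)^{d/2}}e^{i|x-y|^2/(4s)}$, with $(4\pi is)^{d/2}=(4\pi s)^{d/2}i^{d/2}$, leads to
\[
U_t^\sigma ((-\Delta )^{-\sigma}g)(x) = \frac{i^{\sigma -\frac{d}{2}}}{\Gamma (\sigma)\,(4\pi )^{d/2}}\int_0^\infty e^{-i\frac{t^2}{4s}}\int_{\R^d}e^{i\frac{|x-y|^2}{4s}}g(y)\,dy\,\frac{ds}{s^{1+\frac{d}{2}-\sigma}} .
\]
It then remains to interchange the two integrations and, for each fixed $y$, to compute the inner integral in $s$.

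Write $\beta =\frac{d}{2}-\sigma$, so that $0<\beta <1$ by hypothesis, and set $a=\frac{t^2-|x-y|^2}{4}$. The substitution $v=1/s$ turns the inner $s$-integral into $\int_0^\infty e^{-iav}v^{\beta -1}\,dv$. For $a>0$, i.e.\ $y\in B(x,t)$, the scaling $w=av$ together with Lemma~\ref{complexgamma} — which applies precisely because $0<\beta <1$ — yields $\Gamma (\beta )\,i^{-\beta}a^{-\beta}$; for $a<0$, i.e.\ $y\notin\overline{B(x,t)}$, conjugating gives $\Gamma (\beta )\,i^{\beta}|a|^{-\beta}$. Multiplying by the prefactor $i^{\sigma -\frac{d}{2}}=i^{-\beta}$, the contribution of the region $|x-y|>t$ acquires the factor $i^{-\beta}i^{\beta}=1$, hence is real-valued and drops out when the imaginary part is taken — this is the Huygens-type cancellation that confines the kernel to $B(x,t)$. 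The contribution of $|x-y|<t$ acquires $i^{-\beta}i^{-\beta}=i^{-2\beta}=e^{-i\pi\beta}$, whose imaginary part is $-\sin (\pi\beta )$, together with $a^{-\beta}=4^\beta (t^2-|x-y|^2)^{-\beta}$. Collecting the real constants — using $4^\sigma 4^\beta =4^{d/2}$ and $(4\pi )^{d/2}=4^{d/2}\pi ^{d/2}$, so that the powers of $4$ cancel against $\pi^{d/2}$ — produces exactly the stated formula; its right-hand side is finite for Schwartz $g$ because near $\partial B(x,t)$ the singularity $(t^2-|x-y|^2)^{-\beta}\sim (t-|x-y|)^{-\beta}$ is integrable precisely when $\beta <1$.

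The genuinely delicate point, and the main obstacle, is justifying the interchange of the $s$- and $y$-integrations, since neither is absolutely convergent. The plan is to insert a factor $e^{-\varepsilon /s}$: the regularized double integral is then absolutely convergent ($e^{-\varepsilon /s}$ tames the singularity of $s^{-1-\beta}$ at $s=0$, $\beta >0$ handles $s=\infty$, and $g\in L^1$), so Fubini applies, the regularized inner integral equals $\Gamma (\beta )\big(\tfrac{i}{4}(t^2-|x-y|^2)+\varepsilon \big)^{-\beta}$, and passing to the limit $\varepsilon\to 0$ under the $y$-integral is legitimate by dominated convergence with dominating function $C\,|t^2-|x-y|^2|^{-\beta}|g(y)|$, which is integrable across $\partial B(x,t)$ because $\beta <1$ and at infinity because $g$ is Schwartz. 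Finally, that the resulting $u$ is a genuine solution of \eqref{Neumannproblem}, with the correct regularity and Neumann data, is immediate from Corollary~\ref{corollary}, since $u$ is merely a closed form of $-\frac{\sigma 4^\sigma\Gamma (\sigma)}{\sin (\sigma\pi)\Gamma (1-\sigma)}\imag (U_t^\sigma ((-\Delta )^{-\sigma}g))$.
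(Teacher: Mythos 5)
Your proposal is correct and follows essentially the same route as the paper: Corollary \ref{corollary} combined with the Schr\"odinger kernel, evaluation of the inner $s$-integral via Lemma \ref{complexgamma} (giving the Huygens-type cancellation outside $B(x,t)$ under the imaginary part), and a regularize-then-dominate argument for the interchange of integrals, arriving at the same constant. The only cosmetic difference is that you damp with $e^{-\varepsilon/s}$ where the paper instead truncates the $s$-integral at $s=\varepsilon$ before applying Fubini; both yield the same dominating function $C\,|t^2-|x-y|^2|^{-(\frac d2-\sigma)}|g(y)|$, and the passage from the damped to the undamped integral (which you leave implicit) is the routine dominated-convergence step in $s$ that rests on the bound $\sup_{0<s<1}\| e^{is\Delta}g\|_\infty\le\|\widehat g\|_1$ recorded explicitly in the paper.
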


\begin{theorem}
\label{kernel345}
  Suppose that $d\in \{ 3,4,5 \}$ and let $0<\sigma < 1$ be such that $1 < \frac{d}{2} - \sigma < 2$. 
  A solution to the Neumann problem \eqref{Neumannproblem} with Schwartz initial data $g$ is given by
  \begin{equation*}
  u(x,t) = \frac{\sigma \sin((\frac{d}{2} - \sigma - 1)\pi) \Gamma (\frac{d}{2} - \sigma -1)}{2\pi^\frac{d}{2}\sin(\sigma\pi) \Gamma (1-\sigma)}
  \frac{1}{t^2} 
    \int_{B(x,t)} \frac{2(\sigma + 1)g(y) + (y-x)\cdot \nabla g(y)}{(t^2 - |x-y|^2)^{\frac{d}{2} - \sigma - 1}} \,dy .
  \end{equation*}
\end{theorem}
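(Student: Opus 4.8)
The plan is to take the integral representation \eqref{introexpression} as the starting point — it follows from the second part of Corollary \ref{corollary}, the definition of $U_t^\sigma$, and insertion of the Schr\"odinger kernel $(4\pi is)^{-d/2}e^{i|x|^2/(4s)}$ (valid pointwise for Schwartz $g$) — and to evaluate the resulting oscillatory integral directly. Write $\rho = \tfrac d2 - \sigma \in (1,2)$, $a(y) = \tfrac14(t^2-|x-y|^2)$, and
\[
  I_k(x,t) = \int_0^\infty e^{-i\frac{t^2}{4s}}\Big( \int_{\R^d} e^{i\frac{|x-y|^2}{4s}} g(y)\,dy \Big)\frac{ds}{s^{k+\rho}} = \int_0^\infty \Big( \int_{\R^d} g(y)\,e^{-ia(y)/s}\,dy \Big)\frac{ds}{s^{k+\rho}}, \quad k\in\{0,1\},
\]
so that $u(x,t) = \imag\big( c_{d,\sigma}\,i^{\sigma-d/2}\,I_1(x,t)\big)$ for the real constant $c_{d,\sigma}$ of \eqref{introexpression}. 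Since $g$ is Schwartz, $\int_{\R^d}e^{i|x-y|^2/(4s)}g(y)\,dy = (4\pi is)^{d/2}(e^{is\Delta}g)(x)$ is $O(s^{d/2})$ as $s\to0$ and $O(1)$ as $s\to\infty$; together with $k+\rho>1$ this makes the $s$-integrals in $I_0$ and $I_1$ absolutely convergent. The genuine obstruction is that Fubini does \emph{not} allow moving the $y$-integral outside, because $\int_0^\infty s^{-k-\rho}\,ds\cdot\|g\|_1=\infty$: the oscillatory cancellation in $y$ is what makes the integral converge near $s=0$. Overcoming this is the heart of the argument.

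The key device trades a power of $s$ for a $t$-derivative. From $\partial_t e^{-it^2/(4s)} = -\frac{it}{2s}e^{-it^2/(4s)}$ and a locally uniform dominated bound one gets $\partial_t I_0 = -\frac{it}{2}I_1$, i.e. $I_1 = \frac{2i}{t}\partial_t I_0$. In $I_0$ the exponent of $s$ is $\rho$, and $1<\rho<2$ is exactly the range in which the \emph{partial} integrals stay bounded: the substitution $r = |a(y)|/s$ gives, for $0<\varepsilon<R$,
\[
  \int_\varepsilon^R e^{-ia(y)/s}\,\frac{ds}{s^\rho} = |a(y)|^{1-\rho}\int_{|a(y)|/R}^{|a(y)|/\varepsilon} e^{\mp ir}\, r^{\rho-2}\,dr ,
\]
and since $\rho-2\in(-1,0)$ the quantity $M_\rho := \sup_{0<c_1<c_2}\big|\int_{c_1}^{c_2}e^{-ir}r^{\rho-2}\,dr\big|$ is finite; hence $\big|\int_\varepsilon^R e^{-ia(y)/s}s^{-\rho}ds\big|\le M_\rho|a(y)|^{1-\rho}$ uniformly in $\varepsilon,R$, and $y\mapsto|g(y)|\,|a(y)|^{1-\rho}$ is integrable over $\R^d$ (the singularity on the sphere $|x-y|=t$ is integrable because $1-\rho>-1$, and $g$ decays at infinity). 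Applying Fubini on each $[\varepsilon,R]$ and then dominated convergence in $y$ yields $I_0 = \int_{\R^d} g(y)\big(\int_0^\infty e^{-ia(y)/s}s^{-\rho}ds\big)\,dy$. By Lemma \ref{complexgamma} applied with parameter $\rho-1\in(0,1)$ (after $r=a/s$), the inner integral is $i^{1-\rho}\Gamma(\rho-1)a(y)^{1-\rho}$ when $a(y)>0$ and its conjugate $i^{\rho-1}\Gamma(\rho-1)|a(y)|^{1-\rho}$ when $a(y)<0$, so
\[
  I_0 = 4^{\rho-1}\Gamma(\rho-1)\big( i^{1-\rho}W(t) + i^{\rho-1}\widetilde W(t)\big),
\]
with $W(t) = \int_{B(x,t)} g(y)\,(t^2-|x-y|^2)^{1-\rho}\,dy$ and $\widetilde W(t) = \int_{\R^d\setminus B(x,t)} g(y)\,(|x-y|^2-t^2)^{1-\rho}\,dy$ \emph{real} and — because $\rho-1<1$ — genuinely convergent $C^1$ functions of $t$.

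Now differentiate. Using $i^{\sigma-d/2}\cdot i = i^{1-\rho}$ (recall $i^\alpha = e^{i\pi\alpha/2}$ and $\sigma-\tfrac d2 = -\rho$) and that $i^{1-\rho}i^{\rho-1}=i^0=1$ is real,
\[
  u(x,t) = \imag\Big( c_{d,\sigma}\,\frac{2i^{1-\rho}}{t}\,\partial_t I_0\Big) = \frac{2\cdot 4^{\rho-1}\Gamma(\rho-1)\,c_{d,\sigma}}{t}\,\imag\big( i^{2(1-\rho)}W'(t) + \widetilde W'(t)\big) = \frac{2\cdot 4^{\rho-1}\Gamma(\rho-1)\sin((1-\rho)\pi)}{t}\,c_{d,\sigma}\,W'(t),
\]
the exterior term dropping out because $\widetilde W'(t)$ is real — this is finite propagation speed. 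Finally $W'(t)$ is computed by the scaling $y = x+tz$, which turns $W(t)$ into $t^{2(\sigma+1)}\int_{|z|<1} g(x+tz)\,(1-|z|^2)^{1-\rho}\,dz$; differentiating in $t$ and scaling back gives
\[
  W'(t) = \frac1t\int_{B(x,t)}\frac{2(\sigma+1)g(y) + (y-x)\cdot\nabla g(y)}{(t^2-|x-y|^2)^{\rho-1}}\,dy ,
\]
which is precisely the asserted $t^{-2}$-multiple once the prefactor is absorbed; a routine (if tedious) bookkeeping of the constants — assembling $c_{d,\sigma}$ from the constant of Corollary \ref{corollary}, the factor $-i^\sigma/\Gamma(\sigma)$ and the normalization $(4\pi is)^{-d/2}$, and using $\Gamma(z)\Gamma(1-z)=\pi/\sin(\pi z)$, with the sign fixed by $\sin((1-\rho)\pi)=-\sin((\rho-1)\pi)$ — identifies the coefficient with the one in the statement. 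That $u$ actually solves the Neumann problem \eqref{Neumannproblem} (the PDE and both initial conditions) is not part of this computation: it is inherited from Corollary \ref{corollary}, applicable since a Schwartz $g$ lies in $\dom((-\Delta)^{-\sigma/2+3/4})$ and $(-\Delta)^{-\sigma}g\in\mathcal{L}^2$ in the range $1<\tfrac d2-\sigma<2$ with $d\le5$.

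The main obstacle, as flagged, is legitimizing the interchange of the $s$- and $y$-integrations. For $0<\tfrac d2-\sigma<1$ (Theorem \ref{kernel123}) the partial integrals $\int_\varepsilon^R e^{-ia/s}s^{-1-\rho}\,ds$ are already uniformly bounded, so one may swap directly; for $1<\tfrac d2-\sigma<2$ they are not (they grow like $\varepsilon^{-(\rho-1)}$), and it is exactly the reduction $I_1=\tfrac{2i}{t}\partial_t I_0$ — moving one power of $s$ onto $\partial_t$ so the exponent lands in the window $(1,2)$ where $M_\rho<\infty$ — that makes the swap legal, and in doing so it produces the first-order operator $2(\sigma+1)+(y-x)\cdot\nabla$ in the numerator.
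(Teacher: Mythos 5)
Your proposal is correct and follows essentially the same route as the paper: your reduction $I_1=\tfrac{2i}{t}\partial_t I_0$ is the recursion \eqref{recursion}, your evaluation of $I_0$ via uniform bounds on the partial $s$-integrals and Lemma \ref{complexgamma} is exactly the derivation of the kernel formula \eqref{Vgamma} with $\gamma=\rho-1\in(0,1)$, and your scaling computation of $W'$ is the change of variables preceding \eqref{betaformula}, with the constants indeed assembling to the stated coefficient. The only cosmetic difference is that the paper takes imaginary parts before differentiating, so the exterior integral never needs to be differentiated, whereas you differentiate $\widetilde W$ as well (legitimate, but note its $C^1$ regularity requires parametrizing by the distance to the sphere rather than naive differentiation under the integral, since the formally differentiated integrand has a non-integrable singularity of order $-\rho$).
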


This can be continued to higher dimensions, but the formulas will be more complicated.

\begin{theorem}
\label{limitingcases}
  The solution $u_\sigma$ of the Neumann problem \eqref{Neumannproblem} with Schwartz initial data $g$ has the limit
  \begin{equation*}
      \lim_\sigma u_\sigma (x,t) = \frac{1}{c_dt} \int_{\partial B(x,t)} g(y)\,dS(y)
  \end{equation*}
  when
  \begin{itemize}
    \item $d=2$ and $\sigma \searrow 0$, in which case $c_d=2\pi$,
    \item $d=3$ and $\sigma \to \frac{1}{2}$, in which case $c_d = 4\pi$,   
    \item $d=4$ and $\sigma \nearrow 1$, in which case $c_d = 2\pi^2$.
  \end{itemize}
\end{theorem}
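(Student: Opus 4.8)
The plan is to take the limits $\sigma\to(\tfrac d2)^-$ in Theorem~\ref{kernel123} for $d=2,3$, and $\sigma\to(\tfrac d2-1)^-$ in Theorem~\ref{kernel345} for $d=3,4,5$, and show that in all the listed cases the kernel $(t^2-|x-y|^2)^{-\alpha}$ (suitably normalised) with exponent $\alpha=\tfrac d2-\sigma$ (resp.\ $\alpha=\tfrac d2-\sigma-1$) tending to $1$ degenerates, after integration against a Schwartz test function, into a surface integral over $\partial B(x,t)$. The three cases fit this pattern: $d=2$, $\sigma\searrow 0$ gives $\tfrac d2-\sigma\to 1$; $d=3$, $\sigma\to\tfrac12$ gives $\tfrac d2-\sigma\to 1$ (Theorem~\ref{kernel123}) and also $\tfrac d2-\sigma-1\to 0$ (Theorem~\ref{kernel345}); $d=4$, $\sigma\nearrow 1$ gives $\tfrac d2-\sigma-1\to 1$ (Theorem~\ref{kernel345}).

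First I would treat the generic mechanism. Fix $x$ and $t$, set $\rho=|x-y|$ and pass to polar coordinates, writing
\begin{equation*}
  \int_{B(x,t)}\frac{g(y)}{(t^2-\rho^2)^{\alpha}}\,dy
  = \int_0^t \frac{\rho^{d-1}}{(t^2-\rho^2)^{\alpha}}
    \Big(\int_{\partial B(0,1)}g(x+\rho\omega)\,dS(\omega)\Big)d\rho .
\end{equation*}
As $\alpha\nearrow 1$, the factor $(t^2-\rho^2)^{-\alpha}=(t-\rho)^{-\alpha}(t+\rho)^{-\alpha}$ behaves near $\rho=t$ like $(1-\alpha)^{-1}\,\partial_\rho\big[-(t-\rho)^{1-\alpha}\big]$ times a bounded factor, so the mass of $\rho^{d-1}(t^2-\rho^2)^{-\alpha}\,d\rho$ concentrates at $\rho=t$ with total mass blowing up like $1/(1-\alpha)$; precisely, for any continuous $\phi$ on $[0,t]$ one checks $(1-\alpha)\int_0^t(t^2-\rho^2)^{-\alpha}\phi(\rho)\,d\rho\to \phi(t)/(2t)$ (split the integral near $\rho=t$, use $(t^2-\rho^2)=(t-\rho)(t+\rho)$, substitute $u=(t-\rho)/t$, and dominate the tail). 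Since the normalising constant in Theorem~\ref{kernel123} contains $\Gamma(\tfrac d2-\sigma)/\Gamma(1-\sigma)$ and $\sin((\tfrac d2-\sigma)\pi)$, and $\sin((\tfrac d2-\sigma)\pi)=\sin((1-\alpha)\pi)\sim(1-\alpha)\pi$ while $\Gamma(\tfrac d2-\sigma)\to\Gamma(1)=1$, the prefactor vanishes exactly at the rate $1-\alpha$ that cancels the blow-up of the integral, producing a finite surface average. An analogous computation handles Theorem~\ref{kernel345} when $\tfrac d2-\sigma-1\nearrow 1$, while the case $\tfrac d2-\sigma-1\searrow 0$ (i.e.\ $d=3$, $\sigma\to\tfrac12$) is even softer: there $(t^2-|x-y|^2)^{-(\tfrac d2-\sigma-1)}\to 1$ pointwise with an integrable majorant, so the limit of the volume integral is just $\int_{B(x,t)}\big(2(\sigma+1)g(y)+(y-x)\cdot\nabla g(y)\big)dy$, which I would reduce to a boundary term: by the divergence theorem $(y-x)\cdot\nabla g(y)=\nabla\cdot((y-x)g(y))-dg(y)$, so the volume part collapses and one is left with $\int_{\partial B(x,t)}(y-x)\cdot\nu\, g\,dS=t\int_{\partial B(x,t)}g\,dS$, which after tracking the constant ($\sin((\tfrac d2-\sigma-1)\pi)\Gamma(\tfrac d2-\sigma-1)\to\pi$ times the limit $\sigma\to\tfrac12$ of the remaining factor) yields $c_3=4\pi$.

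Then I would verify the three constants by bookkeeping. For $d=2$, $\sigma\searrow 0$: $\alpha=1-\sigma$, the prefactor is $\tfrac{\sigma\sin((1-\sigma)\pi)\Gamma(1-\sigma)}{\pi\sin(\sigma\pi)\Gamma(1-\sigma)}$; using $\sin((1-\sigma)\pi)=\sin(\sigma\pi)$ and the limit lemma above with $\alpha=1-\sigma$, $(1-\alpha)=\sigma$, this gives $\tfrac{\sigma}{\pi}\cdot\tfrac{1}{\sigma}\cdot\tfrac{1}{2t}\int_{\partial B(x,t)}g\,dS=\tfrac{1}{2\pi t}\int_{\partial B(x,t)}g\,dS$, i.e.\ $c_2=2\pi$. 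For $d=4$, $\sigma\nearrow 1$ in Theorem~\ref{kernel345}: $\tfrac d2-\sigma-1=1-\sigma\nearrow 0$… wait, it is $2-\sigma-1=1-\sigma$, which tends to $0$, not $1$; so this case is of the ``soft'' type as well, handled like $d=3$: the exponent $\to 0$, the volume kernel $\to1$, and the divergence-theorem reduction gives $t\int_{\partial B(x,t)}g\,dS$ times the constant $\tfrac{\sigma\sin((1-\sigma)\pi)\Gamma(1-\sigma)}{2\pi^2\sin(\sigma\pi)\Gamma(1-\sigma)}\to\tfrac{1}{2\pi^2}$ (again $\sin((1-\sigma)\pi)=\sin(\sigma\pi)$, $\Gamma(1-\sigma)$ cancels, $\sigma\to1$), giving $c_4=2\pi^2$; the remaining genuinely concentrating case ($\tfrac d2-\sigma\nearrow1$) is the $d=2$, $\sigma\searrow0$ one and, via Theorem~\ref{kernel123} with $d=3$, $\sigma\to\tfrac12$.

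The main obstacle is making the concentration argument rigorous uniformly enough to interchange the limit $\sigma\to\sigma_0$ with the spatial integral: one must produce, for $\sigma$ near $\sigma_0$, a $\sigma$-independent integrable majorant for $(1-\alpha)\,\rho^{d-1}(t^2-\rho^2)^{-\alpha}$ on $(0,t)$ and control the contribution away from $\rho=t$; since $(1-\alpha)(t^2-\rho^2)^{-\alpha}\le C(t,\delta)$ for $\rho\le t-\delta$ and $\int_{t-\delta}^t(1-\alpha)(t-\rho)^{-\alpha}d\rho=(t\text{-factor})\big(1-(\delta/t)^{1-\alpha}\big)\le 1$, dominated convergence applies after splitting. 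The other mild subtlety is that for the ``soft'' cases one needs the integrability of the limiting volume integrand together with the validity of the divergence theorem, both immediate for Schwartz $g$. With these in hand, all three constants fall out of the Gamma/sine asymptotics, completing the proof.
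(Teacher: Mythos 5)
Your proposal is correct and takes essentially the same route as the paper: both reduce to the limit of the kernel formulas as the exponent $\tfrac d2-\sigma$ tends to $1$, handling the concentration regime (exponent $\nearrow 1$, where the vanishing $\sin$/$\Gamma$ prefactor compensates the blow-up of the radial integral --- you via an approximate-identity lemma, the paper via an exact beta-function evaluation) and the soft regime (kernel exponent $\searrow 0$, dominated convergence plus the divergence/Green identity converting the volume integral into $t\int_{\partial B(x,t)}g\,dS$), and then matching the Gamma/sine constants. Your mid-proof correction that $d=4$, $\sigma\nearrow 1$ belongs to the soft case is the right classification, and the constant bookkeeping you carried out agrees with the paper's.
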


Before the proofs, we use the Schrödinger kernel to rewrite the solution formulas.
According to Corollary \ref{corollary}, a solution to \eqref{Neumannproblem} is given by
\begin{equation*}
  u(\cdot , t) = - \frac{\sigma 4^\sigma \Gamma (\sigma)}{\sin (\sigma \pi) \Gamma (1-\sigma)} \,
  \imag (U_t^\sigma ((-\Delta)^{-\sigma}g)),
\end{equation*}
where
\begin{equation*}
    U_t^\sigma ((-\Delta)^{-\sigma}g) = \frac{i^\sigma t^{2\sigma}}{4^\sigma \Gamma (\sigma)} 
    \int_0^\infty e^{-i\frac{t^2}{4s}} \, e^{is\Delta}(-\Delta)^{-\sigma} g \,\frac{ds}{s^{1+\sigma}}
    = \frac{i^\sigma}{\Gamma (\sigma)} 
    \int_0^\infty e^{-i\frac{t^2}{4s}} \, e^{is\Delta} g \,\frac{ds}{s^{1-\sigma}} ,
\end{equation*}
by change of variable. Hence $u(\cdot , t)$ is the imaginary part of the integral
\begin{equation}
\label{Neumannintegral}
  -\frac{\sigma 4^\sigma i^\sigma}{\sin (\sigma\pi) \Gamma (1-\sigma)} \int_0^\infty e^{-i\frac{t^2}{4s}} \, e^{is\Delta}g
  \,\frac{ds}{s^{1-\sigma}} .
\end{equation}

The Schr\"odinger group is given, for Schwartz functions $g$, by
\begin{equation*}
  e^{is\Delta}g(x) = \frac{1}{(4\pi is)^{\frac{d}{2}}} \int_{\R^d} e^{i\frac{|x-y|^2}{4s}} g(y) \,dy 
\end{equation*}
and in
\begin{equation*}
  \int_0^\infty e^{-i\frac{t^2}{4s}} \, e^{is\Delta}g(x) \,\frac{ds}{s^{1-\sigma}}
  = \frac{1}{(4\pi i)^{\frac{d}{2}}} \Big( \int_0^1 + \int_1^\infty \Big) 
  e^{-i\frac{t^2}{4s}} \int_{\R^d} e^{i\frac{|x-y|^2}{4s}} g(y) \, dy \,\frac{ds}{s^{1+\frac{d}{2}-\sigma}} 
  =: I_1(x) + I_2(x) ,
\end{equation*}
the left-hand side 
converges absolutely and uniformly in $x$ for all $d$ and $\sigma$ (restricting to $\sigma < \frac{1}{2}$ for $d=1$).
Indeed, 
\begin{equation*}
  |I_2(x)| \leq \| g \|_1 \int_1^\infty \,\frac{ds}{s^{1+\frac{d}{2}-\sigma}} < \infty ,
\end{equation*}
and 
\begin{equation*}
  \sup_{0 < s < 1} \| e^{is\Delta}g \|_\infty 
  \leq  \sup_{0 < s < 1} \| \widehat{e^{is\Delta}g} \|_1 = \| \widehat{g} \|_1 
\end{equation*}  
so that
\begin{equation*}
  |I_1(x)| \lesssim \| \widehat{g} \|_1 \int_0^1 \,\frac{ds}{s^{1-\sigma}} < \infty .
\end{equation*}

Now the expression \eqref{Neumannintegral} is
\begin{equation*}
  -\frac{\sigma 4^\sigma i^{\sigma - \frac{d}{2}}}{(4\pi)^{\frac{d}{2}} \sin (\sigma\pi ) \Gamma (1-\sigma)}
  \int_0^\infty e^{-i\frac{t^2}{4s}} \int_{\R^d} e^{i\frac{|x-y|^2}{4s}} g(y) \,dy \,\frac{ds}{s^{1+\frac{d}{2} - \sigma}} ,
\end{equation*}
and we define
\begin{equation*}
  V_\gamma g(x,t) = i^{-\gamma} \int_0^\infty e^{-i\frac{t^2}{4s}} \int_{\R^d} e^{i\frac{|x-y|^2}{4s}} g(y) \,dy 
  \,\frac{ds}{s^{1+\gamma}}
\end{equation*}
for a free parameter $\gamma > 0$. Thus
\begin{equation}
\label{solformula}
  u(x,t) = -\frac{\sigma 4^{\sigma - \frac{d}{2}}}{\pi^{\frac{d}{2}}\sin (\sigma\pi) \Gamma (1-\sigma)} \,
  \imag V_\gamma g(x,t) .
\end{equation}

Observe that
\begin{equation}
\label{recursion}
  \partial_t V_\gamma g(x,t) = \frac{t}{2} V_{\gamma +1}g(x,t)
\end{equation}
which after iteration gives, for any $n\geq 1$, that
\begin{equation*}
  V_{\gamma + n}g(x,t) = \Big( \frac{2}{t} \partial_t \Big)^n V_\gamma g(x,t) .
\end{equation*}

\subsection*{Cases $0 < \gamma < 1$ and $1 < \gamma < 2$}
When $0 < \gamma < 1$, the kernel can be computed from
\begin{equation*}
  V_\gamma g(x,t) = i^{-\gamma} \lim_{\varepsilon\to 0} \int_{\R^d} \int_\varepsilon^\infty
  e^{-i\frac{t^2 - |x-y|^2}{4s}} \,\frac{ds}{s^{1+\gamma}} g(y) \,dy .
\end{equation*}
In the inner integral here, we make a change of variable $\frac{t^2-|x-y|^2}{4s} = \pm r$, 
separating the cases $t > |x-y|$ and $t < |x-y|$.
Doing so we obtain, respectively,
\begin{equation*}
  \int_\varepsilon^\infty e^{-i\frac{t^2-|x-y|^2}{4s}} \,\frac{ds}{s^{1+\gamma}} 
  = \frac{4^\gamma}{(t^2 - |x-y|^2)^\gamma} \int_0^{\frac{t^2-|x-y|^2}{4\varepsilon}} e^{-ir} \,\frac{dr}{r^{1-\gamma}}
  \longrightarrow \frac{4^\gamma i^{-\gamma} \Gamma (\gamma)}{(t^2-|x-y|^2)^\gamma}
\end{equation*}
and
\begin{equation*}
  \int_\varepsilon^\infty e^{-i\frac{t^2-|x-y|^2}{4s}} \,\frac{ds}{s^{1+\gamma}} 
  = \frac{4^\gamma}{(|x-y|^2 - t^2)^\gamma} \int_0^{\frac{|x-y|^2 - t^2}{4\varepsilon}} e^{ir} \,\frac{dr}{r^{1-\gamma}}
  \longrightarrow \frac{4^\gamma i^\gamma \Gamma (\gamma)}{(|x-y|^2 - t^2)^\gamma} ,
\end{equation*}
where we also used Lemma \ref{complexgamma}. Consequently, we have by dominated convergence
\begin{equation}
\label{Vgamma}
  V_\gamma g(x,t) = 4^\gamma \Gamma (\gamma) \int_{\R^d}
  \frac{i^{-2\gamma}1_{B(x,t)}(y) + 1_{\R^d\setminus \overline{B(x,t)}}(y)}{|t^2 - |x-y|^2|^\gamma} g(y) \,dy .
\end{equation}

\begin{proof}[Proof of Theorem \ref{kernel123}]
  The formula for the solution $u$ follows from \eqref{solformula} and \eqref{Vgamma} with $\gamma = \frac{d}{2} - \sigma$.
\end{proof}

In order to calculate $\partial_t \imag V_\beta g(x,t)$ 
for $0 < \beta < 1$ we argue by change of variables $y' = \frac{y-x}{t}$:
\begin{align*}
  \partial_t \Big(\int_{B(x,t)} \frac{g(y)}{(t^2 - |x-y|^2)^\beta} \,dy \Big)
  &= \partial_t \Big( t^{d-2\beta} \int_{B(0,1)} \frac{g(x+ty')}{(1-|y'|^2)^\beta} \,dy' \Big) \\
  &= t^{d-2\beta -1} \int_{B(0,1)} \frac{(d-2\beta)g(x+ty') + ty'\cdot \nabla g(x+ty')}{(1-|y'|^2)^\beta} \,dy' \\
  &= \frac{1}{t} \int_{B(x,t)} \frac{(d-2\beta)g(y) + (y-x)\cdot \nabla g(y)}{(t^2 - |x-y|^2)^\beta} \,dy .
\end{align*}
Now \eqref{Vgamma} implies
\begin{equation}
\label{betaformula}
  \partial_t \imag V_\beta g(x,t) 
  = -\sin(\beta\pi)4^\beta\Gamma (\beta) 
  \frac{1}{t} \int_{B(x,t)} \frac{(d-2\beta)g(y) + (y-x)\cdot \nabla g(y)}{(t^2 - |x-y|^2)^\beta} \,dy .
\end{equation}

\begin{proof}[Proof of Theorem \ref{kernel345}]
  The formula for the solution follows by means of \eqref{recursion} 
  from \eqref{solformula} with $\gamma = \frac{d}{2} - \sigma$
  and \eqref{betaformula} with $\beta = \gamma - 1$, so that
  \begin{align*}
    u(x,t) 
    &= -\frac{\sigma 4^{\sigma - \frac{d}{2}}}{\pi^{\frac{d}{2}} \sin (\sigma\pi) \Gamma (1-\sigma)} \frac{2}{t} \partial_t \imag V_{\gamma - 1}g(x,t) \\
    &= 2c_{d,\sigma} \frac{1}{t^2} 
    \int_{B(x,t)} \frac{2(\sigma + 1)g(y) + (y-x)\cdot \nabla g(y)}{(t^2 - |x-y|^2)^{\frac{d}{2} - \sigma - 1}} \,dy ,
  \end{align*}
  where
  \begin{equation*}
    c_{d,\sigma} = \frac{\sigma 4^{\sigma - \frac{d}{2}} \sin ((\gamma - 1) \pi) 4^{\gamma - 1} \Gamma (\gamma - 1)}{\pi^\frac{d}{2} \sin (\sigma\pi) \Gamma (1-\sigma)}
    = \frac{\sigma \sin((\frac{d}{2} - \sigma - 1)\pi) \Gamma (\frac{d}{2} - \sigma -1)}{4\pi^\frac{d}{2}\sin(\sigma\pi) \Gamma (1-\sigma)} .
  \end{equation*}
\end{proof}

\subsection*{Limiting cases $\gamma \nearrow 1$ and $\gamma \searrow 1$}
For the case $\gamma \nearrow 1$, recall first that
\begin{equation*}
  \imag V_\gamma g(x,t) = -4^\gamma \Gamma (\gamma) \sin (\gamma\pi) \int_{B(x,t)} \frac{g(y)}{(t^2 - |x-y|^2)^\gamma} \,dy.
\end{equation*}
By the change of variable $z = y-x$ and writing $z' = \frac{tz}{|z|}$, the integral can be written
\begin{equation*}
   \int_{B(0,t)} \frac{g(x+z)}{(t^2 - |z|^2)^\gamma}\, dz =  \int_{B(0,t)} \frac{g(x+z)-g(x+z')}{(t^2 - |z|^2)^\gamma}\, dz
   + \int_{B(0,t)} \frac{g(x+z')}{(t^2 - |z|^2)^\gamma}\, dz =: I +II.
\end{equation*}
Here
\begin{equation*}
 |I| \leq  \|  \nabla g\|_\infty \int_{B(0,t)} \frac{t-|z|}{(t^2 - |z|^2)^\gamma}\, dz
\leq \| \nabla g\|_\infty \int_{B(0,t)} \frac{(t-|z|)^{1-\gamma}}{(t + |z|)^\gamma}\, dz \lesssim \| \nabla g\|_\infty
\end{equation*}
and, with polar coordinates,
\begin{equation*}
 II = \int_{|\omega|=1} g(x+t\omega)\,dS(\omega)\, 
\int_0^t \frac{r^{d-1}}{(t^2 - r^2)^\gamma}\, dr.
\end{equation*}
After the transformation $\rho = r^2/t^2$, the inner integral here will be
\begin{equation*}
\frac{t^{d-2\gamma}}2 \int_0^1 \frac{\rho^{\frac d2-1}}{(1-\rho)^\gamma}\, d\rho
= \frac{t^{d-2\gamma}}2\, 
\frac{\Gamma(\frac d2) \Gamma(1-\gamma)}{  \Gamma(\frac d2+1-\gamma)}.
\end{equation*}
Since $\Gamma(1-\gamma) = \frac1{1-\gamma}+O(1)$ and 
$\sin(\gamma\pi) = (1-\gamma)\pi +O((1-\gamma)^2)$ as $\gamma \to 1$, we conclude that
\begin{equation*}
\label{gammalimit}
  \imag V_\gamma g(x,t) \to  
-4\pi \frac{t^{d-2}}2\,\int_{|\omega|=1} g(x+t\omega)\,dS(\omega) 
= -\frac{2\pi}t \int_{\partial B(x,t)} g(y) \,dS(y),
\end{equation*}
 as $\gamma \nearrow 1$.

Consider then the case $\gamma \searrow 1$ and begin by recalling that
\begin{align*}
  \imag V_\gamma g(x,t) &= \frac{2}{t} \partial_t \imag V_{\gamma - 1} g(x,t) \\
  &= -\frac{2}{t^2} 4^{\gamma - 1} \Gamma (\gamma - 1) \sin ((\gamma - 1)\pi)
  \int_{B(0,t)} \frac{(d-2(\gamma - 1))g(x+y) + y\cdot \nabla g(x+y)}{(t^2 - |y|^2)^{\gamma - 1}} \,dy .
\end{align*}
Since $\Gamma (\gamma - 1) \sin ((\gamma - 1)\pi) \longrightarrow \pi$ as $\gamma\to 1$,
\begin{equation*}
  \lim_{\gamma\searrow 1} \imag V_\gamma g(x,t)
  = -\frac{2\pi}{t^2} \int_{B(0,t)} \Big( dg(x+y) + y\cdot\nabla g(x+y) \Big) \,dy .
\end{equation*}
Noting that
\begin{equation*}
  d = \Delta h \quad \textup{and} \quad y = \nabla h \quad \textup{for} \quad h(y) = \frac{|y|^2}{2},
\end{equation*}
we make use of Green's formula
\begin{equation*}
  \int_{B(0,t)} \Big( (\Delta h) g + \nabla h \cdot \nabla g \Big) \,dy 
  = \int_{\partial B(0,t)} (\nabla h \cdot \frac{y}{t}) g \,dS(y)
\end{equation*}
to see that also
\begin{equation}
\label{gammalimit2}
  \lim_{\gamma\searrow 1} \imag V_\gamma g(x,t) = -\frac{2\pi}{t} \int_{\partial B(x,t)} g(y) \,dS(y) .
\end{equation}

\begin{proof}[Proof of Theorem \ref{limitingcases}]
  Note first that $\gamma = \frac{d}{2} - \sigma \to 1$ is possible in dimensions $d=2$, $3$ and $4$ when $\sigma \searrow 0$,
  $\sigma \to \frac{1}{2}$ and $\sigma \nearrow 1$, respectively.
  Since
  \begin{equation*}
    u_\sigma (x,t) = -\frac{\sigma 4^{\sigma - \frac{d}{2}}}{\pi^{\frac{d}{2}}\sin (\sigma\pi) \Gamma (1-\sigma)} \,
  \imag V_{\frac{d}{2} - \sigma} g(x,t) ,
  \end{equation*}
  together with the limits \eqref{gammalimit} and \eqref{gammalimit2} it suffices to note that
  \begin{equation*}
    \frac{\sigma 4^{\sigma - \frac{d}{2}}}{\pi^{\frac{d}{2}}\sin (\sigma\pi) \Gamma (1-\sigma)} 
    \longrightarrow 
      \begin{cases}
        \frac{1}{4\pi^2} , \quad &\textup{when} \quad d=2 \quad \textup{and} \quad \sigma \searrow 0 , \\
        \frac{1}{8\pi^2} , \quad &\textup{when} \quad d=3 \quad \textup{and} \quad \sigma \to \frac{1}{2} , \\
        \frac{1}{4\pi^3} , \quad &\textup{when} \quad d=4 \quad \textup{and} \quad \sigma \nearrow 1 .
      \end{cases}
  \end{equation*}
\end{proof}

\begin{remark}
  The one-dimensional problem
  \begin{equation}
    \begin{cases}
      \partial_t^2 u + \frac{1-2\sigma}{t}\partial_t u = \partial_x^2 u \\
      u(\cdot , 0)=0 , \;\, \partial_t^\sigma u (\cdot , 0) = g
    \end{cases}
  \end{equation}
  is not covered by Theorem \ref{kernel123} when $\sigma > \frac{1}{2}$. A solution formula can be derived by the method of
  descent by viewing $u$ and $g$ with an additional spatial variable and using Theorem \ref{kernel123} for $d=2$.
  Doing so we obtain
  \begin{equation*}
    u(x,t) = \frac{\sigma}{\pi} t^{2\sigma} \iint_{y_1^2 + y_2^2 < 1} \frac{g(x+ty_1)}{(1-y_1^2 - y_2^2)^{1-\sigma}} dy_1dy_2 .
  \end{equation*}
  Here the double integral can be calculated as follows:
  \begin{equation*}
    \int_{-1}^1 \frac{g(x+ty_1)}{(1-y_1^2)^{1-\sigma}} \int_{y_2^2 < 1-y_1^2} \Big( 1 - \frac{y_2^2}{1-y_1^2} \Big)^{\sigma - 1}
    \,dy_2\,dy_1 = \int_{-1}^1 \frac{g(x+ty_1)}{(1-y_1^2)^{\frac{1}{2}-\sigma}} \int_{-1}^1 (1-s^2)^{\sigma - 1} \,ds \,dy_1 ,
  \end{equation*}
  where
  \begin{equation*}
    \int_{-1}^1 (1-s^2)^{\sigma - 1} \,ds 
    = \int_0^1 (1-r)^{\sigma - 1} r^{-\frac{1}{2}} \,dr    
    = \frac{\Gamma (\sigma) \Gamma (\frac{1}{2})}{\Gamma (\sigma + \frac{1}{2})} .
  \end{equation*}
  Therefore
  \begin{equation*}
    u(x,t) = \frac{\sigma \Gamma (\sigma)}{\pi^{\frac{1}{2}} \Gamma (\sigma + \frac{1}{2})}
    \int_{x-t}^{x+t} \frac{g(y)}{ (t^2 - |x-y|^2)^{\frac{1}{2} - \sigma}} \,dy ,
  \end{equation*}
  which converges to
  \begin{equation*}
    \frac{1}{2} \int_{x-t}^{x+t} g(y) \,dy ,
  \end{equation*}
  as $\sigma \searrow \frac{1}{2}$. This coincides with the limit as $\sigma\nearrow \frac{1}{2}$.
\end{remark}

\begin{remark}
  In a similar vein one may consider the modified Klein--Gordon equation arising from $L=-\Delta + m^2$ with $m>0$.
  The solution to \eqref{Neumannproblem} is then given by
  \begin{equation*}
    u(x,t) = -\frac{\sigma 4^{\sigma - \frac{d}{2}}}{\pi^{\frac{d}{2}} \sin (\sigma\pi) \Gamma (1-\sigma)} \,
    \imag \Big( i^{\sigma - \frac{d}{2}} \int_0^\infty e^{-i\frac{t^2}{4s}} \int_{\R^d}
    e^{i\frac{|x-y|^2}{4s} -im^2s} g(y) \,dy \,\frac{ds}{s^{1+\frac{d}{2} - \sigma}} \Big) , 
  \end{equation*}
  where the quantity inside $\imag$ coincides with
  \begin{equation*}
    V_\gamma^m g(x,t) = i^{-\gamma} \lim_{\varepsilon\to 0} \int_{\R^d}
    \int_\varepsilon^\infty e^{i\frac{|x-y|^2 - t^2}{4s} -im^2s} \,\frac{ds}{s^{1+\gamma}} \, g(y) \,dy ,
  \end{equation*}
  with $\gamma = \frac{d}{2} - \sigma$. Assuming that $0 < \gamma < 1$, we may use the formula (cf. Lemma \ref{complexgamma})
  \begin{equation*}
    \int_0^\infty e^{-\frac{A}{is} - im^2s} \,\frac{ds}{s^{1+\gamma}}
    = i\ ^\gamma \int_0^\infty e^{-\frac{A}{s} - m^2s} \,\frac{ds}{s^{1+\gamma}}
  \end{equation*}
  when $A = \frac{|x-y|^2 - t^2}{4} > 0$ to deduce finite speed of propagation:
  \begin{equation*}
    u(x,t) = -\frac{\sigma 4^{\sigma - \frac{d}{2}}}{\pi^{\frac{d}{2}} \sin (\sigma\pi) \Gamma (1-\sigma)} \,
    \imag \Big( i^{\sigma - \frac{d}{2}} \lim_{\varepsilon\to 0} \int_{B(x,t)}
    \int_\varepsilon^\infty e^{i\frac{|x-y|^2 - t^2}{4s} -im^2s} \,\frac{ds}{s^{1+\frac{d}{2} - \sigma}} \, g(y) \,dy \Big) . 
  \end{equation*}
\end{remark}

\section{A Bessel function approach}
\label{sectionBessel}

In this section we study the wave extension problem for the Laplacian $L=-\Delta$ on $\R^d$ by means of Bessel functions  
and obtain some elementary estimates for the solutions. In order to simplify the presentation, we make stronger decay and
regularity assumptions than necessary.

For a given $\sigma\in (0,1)$ we consider the problem
\begin{equation}
\label{sigmaWE}
\begin{cases}
  \partial_t^2 u + \frac{1-2\sigma}{t}\partial_t u = \Delta u \\
  u(\cdot , 0)=f, \;\, \partial_t^\sigma u(\cdot , 0) = g ,
\end{cases}
\end{equation}
where $f$ and $g$ are Schwartz functions whose Fourier transforms vanish in a neighbourhood of 
the origin, and $\partial_t^\sigma = \frac{1}{2\sigma} t^{1-2\sigma} \partial_t$.

We interpret the problem as an evolution equation in $\mathcal{L}^2(\R^d)$, and call $u$ a \emph{Schwartz solution} if
the map $(0,\infty) \to \mathcal{L}^2(\R^d) : t\mapsto u(\cdot , t)$ is $C^2$ (in the norm topology), 
and $u(\cdot , t)$, $\partial_t u(\cdot ,t)$,
and $\partial_t^2 u(\cdot ,t)$ are Schwartz functions for every $t>0$.
The Fourier transform of such a solution $u$ 
then satisfies for every $\xi\in\R^d$ the equation
\begin{equation}
\label{fourier}
  \partial_t^2 \widehat{u}(\xi,t) + \frac{1-2\sigma}{t}\partial_t \widehat{u}(\xi,t) = -|\xi|^2 \widehat{u}(\xi,t) ,
  \quad t>0 .
\end{equation}

Writing $\widehat{u}(\xi , t) = (t|\xi|)^\sigma w(t|\xi|)$ we get for $w$
\begin{equation*}
  r^2w''(r) + rw'(r) + (r^2 - \sigma^2) w(r) = 0 ,
\end{equation*}
which is Bessel's differential equation. Thus two linearly independent solutions to \eqref{fourier} are
$\widehat{u}(\xi , t) = (t|\xi|)^\sigma J_{\pm\sigma}(t|\xi|)$; see \cite[Sections 5.1 and 5.3]{LEBEDEV}.

Let us then find the coefficients that give convergence to the correct initial values. The Bessel functions satisfy for
$0 < \sigma < 1$ and $r>0$
\begin{equation*}
  J_\sigma (r) = \frac{1}{\Gamma (1 + \sigma)} \Big( \frac{r}{2} \Big)^\sigma + O(r^{\sigma + 2})
  \quad \textup{and} \quad
  J_{-\sigma} (r) = \frac{1}{\Gamma (1 - \sigma)} \Big( \frac{r}{2} \Big)^{-\sigma} + O(r^{-\sigma + 2})
\end{equation*}
as $r\searrow 0$, as seen from the power series expansion. Further,
\begin{equation*}
  \frac{d}{dr} (r^\sigma J_{\pm\sigma}(r)) = \pm r^\sigma J_{\pm (\sigma - 1)}(r) ;
\end{equation*}
see \cite[Formula (5.3.5)]{LEBEDEV}. This implies
\begin{equation*}
  (t|\xi|)^\sigma J_\sigma (t|\xi|) \longrightarrow 0 \quad \textup{and} 
  \quad (t|\xi|)^\sigma J_{-\sigma}(t|\xi|) \longrightarrow \frac{2^\sigma}{\Gamma (1-\sigma)}
\end{equation*}
as $t\to 0$. Moreover,
\begin{equation*}
  \partial_t^\sigma \Big( (t|\xi|)^\sigma J_\sigma (t|\xi|) \Big) 
  \longrightarrow \frac{|\xi|^{2\sigma}}{\sigma 2^\sigma \Gamma (\sigma)} \quad \textup{and} 
  \quad \partial_t^\sigma \Big( (t|\xi|)^\sigma J_{-\sigma}(t|\xi|) \Big) \longrightarrow 0
\end{equation*}
as $t\to 0$.

Therefore the solution to \eqref{fourier} given by
\begin{equation}
\label{sol}
  \widehat{u}(\xi,t) = \frac{\Gamma (1-\sigma)}{2^\sigma}(t|\xi|)^\sigma J_{-\sigma}(t|\xi|) \widehat{f}(\xi)
  + \sigma 2^\sigma \Gamma (\sigma) (t|\xi|)^\sigma J_\sigma (t|\xi|) \, |\xi|^{-2\sigma}\widehat{g}(\xi)
\end{equation}
satisfies
\begin{equation*}
  \widehat{u}(\xi,t) \longrightarrow \widehat{f}(\xi) \quad \textup{and} \quad 
  \partial_t^\sigma\widehat{u}(\xi,t) \longrightarrow \widehat{g}(\xi) \quad \textup{as}
  \quad t\to 0.
\end{equation*}

Since $\widehat{f}$ and $\widehat{g}$ were assumed to vanish in a neighbourhood of the origin, the next result
follows immediately by dominated convergence:

\begin{theorem}
\label{Besselsol}
Let $0 < \sigma < 1$. The unique Schwartz solution $u$ to the problem
\begin{equation*}
\begin{cases}
  \partial_t^2 u + \frac{1-2\sigma}{t}\partial_t u = \Delta u \\
  u(\cdot , 0)=f, \;\, \partial_t^\sigma u(\cdot , 0) = g ,
\end{cases}
\end{equation*}
with Schwartz initial data $f$ and $g$ whose Fourier transforms vanish in a neighbourhood of the origin, is given by
\begin{equation*}
  u(\cdot , t) = \frac{\Gamma (1-\sigma)}{2^\sigma} (t\sqrt{-\Delta})^\sigma J_{-\sigma}(t\sqrt{-\Delta}) f 
  + \sigma 2^\sigma \Gamma (\sigma) (t\sqrt{-\Delta})^\sigma J_\sigma (t\sqrt{-\Delta}) (-\Delta)^{-\sigma} g,
\end{equation*}
which convergences to the initial data in $\mathcal{L}^2(\R^d)$.
\end{theorem}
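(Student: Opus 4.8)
The plan is to work entirely on the Fourier side, where the problem decouples into the scalar ODEs \eqref{fourier}, and to prove existence and uniqueness separately. For existence, I would verify that \eqref{sol} defines an admissible solution. First I would check that for each fixed $t>0$ the right-hand side of \eqref{sol} is a Schwartz function of $\xi$: since $\widehat f$ and $\widehat g$ vanish on some ball $B(0,\rho)$, the factor $|\xi|^{-2\sigma}$ is harmless there, while on $|\xi|\ge\rho$ the functions $(t|\xi|)^\sigma J_{\pm\sigma}(t|\xi|)$ are smooth in $\xi$ and, together with all their $\xi$-derivatives, grow at most polynomially in $|\xi|$ by the large-argument asymptotics of Bessel functions and their derivatives; multiplying by the rapidly decreasing $\widehat f,\widehat g$ keeps everything Schwartz. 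The same bounds, now locally uniformly in $t>0$ (using the recurrence $\frac{d}{dr}(r^\sigma J_{\pm\sigma}(r))=\pm r^\sigma J_{\pm(\sigma-1)}(r)$ for the $t$-derivatives), justify differentiating \eqref{sol} twice in $t$, so the inverse Fourier transform $u(\cdot,t)$ is a $C^2$ map $(0,\infty)\to\mathcal{L}^2(\R^d)$ with Schwartz values; and since \eqref{fourier} is exactly Bessel's equation for $w$, equation \eqref{sigmaWE} holds by construction. Convergence to the initial data in $\mathcal{L}^2$ then follows by dominated convergence from the pointwise limits $(t|\xi|)^\sigma J_{-\sigma}(t|\xi|)\to 2^\sigma/\Gamma(1-\sigma)$, $(t|\xi|)^\sigma J_\sigma(t|\xi|)\to 0$, and the corresponding $\partial_t^\sigma$-limits recorded before the theorem, with an $\mathcal{L}^2$-dominating function supplied by the bound $r^\sigma|J_{\pm\sigma}(r)|\lesssim 1+r^{\sigma-1/2}$ (hence $\lesssim 1+|\xi|^{1/2}$ for $t$ bounded) together with the boundedness of $|\xi|^{-2\sigma}$ on the support of $\widehat g$.

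For uniqueness, let $u$ be any Schwartz solution. Since $t\mapsto u(\cdot,t)$ is $C^2$ into $\mathcal{L}^2$ with Schwartz values, $\widehat u(\xi,t)$ is $C^2$ in $t$ for each $\xi$ and satisfies \eqref{fourier}; for $\xi\ne 0$ the substitution $\widehat u(\xi,t)=(t|\xi|)^\sigma w(t|\xi|)$ turns this into Bessel's equation of order $\sigma$, whose solution space on $(0,\infty)$ is two-dimensional and spanned by $J_{\pm\sigma}$ since $\sigma\notin\Z$. Hence there are functions $a,b$ with
\[
  \widehat u(\xi,t)=a(\xi)(t|\xi|)^\sigma J_{-\sigma}(t|\xi|)+b(\xi)(t|\xi|)^\sigma J_\sigma(t|\xi|).
\]
Letting $t\to 0$ and using the asymptotics above gives, pointwise in $\xi$, that $\widehat u(\xi,t)\to a(\xi)\,2^\sigma/\Gamma(1-\sigma)$ and $\partial_t^\sigma\widehat u(\xi,t)\to b(\xi)\,|\xi|^{2\sigma}/(\sigma 2^\sigma\Gamma(\sigma))$. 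On the other hand $u(\cdot,t)\to f$ and $\partial_t^\sigma u(\cdot,t)\to g$ in $\mathcal{L}^2$, so $\widehat u(\cdot,t)\to\widehat f$ and $\partial_t^\sigma\widehat u(\cdot,t)\to\widehat g$ in $\mathcal{L}^2$; since an $\mathcal{L}^2$-limit and a pointwise limit must agree almost everywhere, $a$ and $b$ are uniquely determined (and in fact equal to the coefficients in \eqref{sol}). Therefore $\widehat u$, and hence $u$, coincides with the function of Theorem \ref{Besselsol}.

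The main obstacle is this last matching step in the uniqueness argument: equation \eqref{fourier} is singular at $t=0$, so one cannot simply prescribe a Cauchy datum there and invoke classical ODE uniqueness. The point to pin down carefully is that the conditions $u(\cdot,0)=f$ and $\partial_t^\sigma u(\cdot,0)=g$ nevertheless select a unique member of the two-parameter family, which works precisely because the two basis solutions are decoupled at $t=0$ — the solution $(t|\xi|)^\sigma J_{-\sigma}(t|\xi|)$ has nonzero value-limit and zero $\partial_t^\sigma$-limit, and $(t|\xi|)^\sigma J_\sigma(t|\xi|)$ the reverse — so that $(a,b)\mapsto(\text{value limit},\ \partial_t^\sigma\text{ limit})$ is a bijection. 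The remaining work (reconciling the $\mathcal{L}^2$ and pointwise limits, and the Schwartz/Bessel growth estimates in the existence part) is routine.
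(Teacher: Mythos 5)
Your proposal is correct and follows essentially the same route as the paper: pass to the Fourier side, reduce \eqref{fourier} to Bessel's equation so the solution space for each $\xi\neq 0$ is spanned by $(t|\xi|)^\sigma J_{\pm\sigma}(t|\xi|)$, use the small-argument asymptotics and the derivative formula to see that the value and $\partial_t^\sigma$ limits at $t=0$ decouple and fix the coefficients, and get $\mathcal{L}^2$ convergence to the data by dominated convergence using the standard Bessel bounds and the vanishing of $\widehat f,\widehat g$ near the origin. You merely spell out more explicitly the uniqueness matching (pointwise vs.\ $\mathcal{L}^2$ limits) that the paper leaves implicit.
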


\begin{remark}
  The same method can be used to prove uniqueness of solutions also in other settings, say for
  non-negative self-adjoint operators $L$ with discrete spectrum (such as the Hermite operator $L=-\Delta + |x|^2$ on $\R^d$),
  by replacing $|\xi| = \sqrt{\lambda}$.
  In such a case the solution $u$ of
  \begin{equation*}
    \begin{cases}
      \partial_t^2 u + \frac{1-2\sigma}{t}\partial_t u = -L u \\
      u(\cdot , 0)=f, \;\, \partial_t^\sigma u(\cdot , 0) = g ,
    \end{cases}
  \end{equation*}
  is unique under the assumption that $u$, $f$ and $g$ all consist of a finite number of eigenfunctions.
\end{remark}

\subsection*{Fixed-time norm estimates}
For any order of smoothness $s\in\R$ we define the Sobolev norm of a Schwartz function $f$ on $\R^d$ by
\begin{equation*}
  \| f \|_{2,s} = \Big( \int_{\R^d} |\la \xi \ra^s \widehat{f}(\xi)|^2 \,d\xi \Big)^{\frac{1}{2}} ,
\end{equation*}
where $\la \xi \ra = (1+|\xi|^2)^{\frac{1}{2}}$. As a consequence of Theorem \ref{Besselsol}, we derive the following
fixed-time norm estimates for solutions $u$ in the spirit of \cite[Theorem 2]{STRICONV}:  

\begin{theorem}
\label{fixedtime}
  Let $s\geq 0$ and let $u$, $f$ and $g$ be as in Theorem \ref{Besselsol}.
  \begin{itemize}
  \item If $\sigma \leq \frac{1}{2}$ then 
  $\| u(\cdot , t) \|_{2,s} \lesssim \| f \|_{2,s} + t^{2\sigma} \| g \|_2 + \| g \|_{2, s-2\sigma}$
  for all $t>0$.
  \item If $\sigma \geq \frac{1}{2}$ then 
  $\| u(\cdot , t) \|_{2,s} \lesssim (1+t)^{\sigma - \frac{1}{2}} \| f \|_{2,s+\sigma - \frac{1}{2}} + t^{2\sigma} \| g \|_2 +
  t^{\sigma - \frac{1}{2}} \| g \|_{2,s-\sigma - \frac{1}{2}}$ for all $t>0$.
  \end{itemize}
  \begin{proof}
    Let $\chi$ be a frequency cut-off with $1_{B(0,1)} \leq \chi \leq 1_{B(0,2)}$
    and recall the standard estimates for Bessel functions: 
    \begin{equation}
      \label{allr}      
        |J_{\pm\sigma} (r)| \lesssim
      \begin{cases}
        r^{\pm\sigma}, &0 < r < 1, \\
        r^{-\frac{1}{2}}, &r \geq 1 .
      \end{cases}
    \end{equation}    
    
    Consider first the case $0 < \sigma \leq \frac{1}{2}$. By \eqref{allr} we have
    \begin{equation*}
      (t|\xi|)^\sigma |J_{-\sigma}(t|\xi|)| \lesssim 1
    \end{equation*}
    and therefore
    \begin{equation*}
      \| (t\sqrt{-\Delta})^\sigma J_{-\sigma}(t\sqrt{-\Delta}) f \|_{2,s} \lesssim \| f \|_{2,s} .
    \end{equation*}
    On the other hand, according to \eqref{allr} we have
    $(t|\xi|)^\sigma |J_\sigma (t|\xi|)| \lesssim \min \{ (t|\xi|)^{2\sigma} , 1 \}$, which implies that
    \begin{align*}
      \la \xi \ra^s (t|\xi|)^\sigma |J_\sigma (t|\xi|)| \, |\xi|^{-2\sigma}
      &\lesssim \chi (\xi) \la \xi \ra^s (t|\xi|)^{2\sigma} |\xi|^{-2\sigma} + (1-\chi (\xi)) \la \xi \ra^s |\xi|^{-2\sigma}\\
      &\lesssim t^{2\sigma} \chi (\xi) + (1-\chi (\xi)) \la \xi \ra^{s-2\sigma} ,
    \end{align*}
    and thus
    \begin{equation*}
      \| (t\sqrt{-\Delta})^\sigma J_{\sigma}(t\sqrt{-\Delta}) (-\Delta)^{-\sigma}g \|_{2,s} 
      \lesssim t^{2\sigma} \| g \|_2 + \| g \|_{2,s-2\sigma} .
    \end{equation*}
    
    Assume then that $\frac{1}{2} \leq \sigma < 1$. By \eqref{allr} we have
    $(t|\xi|)^\sigma |J_{-\sigma}(t|\xi|)| \lesssim \max \{ 1 , (t|\xi|)^{\sigma - \frac{1}{2}} \}$ so that
    \begin{equation*}
      \la \xi \ra^s (t|\xi|)^\sigma |J_{-\sigma}(t|\xi|)|
      \lesssim \chi (t\xi) \la \xi \ra^s + (1-\chi (t\xi)) \la \xi \ra^s (t|\xi|)^{\sigma - \frac{1}{2}} ,
    \end{equation*}
    where $\la \xi \ra^s (t|\xi|)^{\sigma - \frac{1}{2}} \leq t^{\sigma - \frac{1}{2}} \la \xi \ra^{s+\sigma - \frac{1}{2}}$.
    Consequently,
    \begin{equation*}
      \| (t\sqrt{-\Delta})^\sigma J_{-\sigma}(t\sqrt{-\Delta}) f \|_{2,s}
      \lesssim \| f \|_{2,s} + t^{\sigma - \frac{1}{2}} \| f \|_{2,s+\sigma - \frac{1}{2}}
      \lesssim (1+t)^{\sigma - \frac{1}{2}} \| f \|_{2,s+\sigma - \frac{1}{2}} .
    \end{equation*}
    Moreover, from \eqref{allr} we see that
    $(t|\xi|)^\sigma |J_\sigma (t|\xi|)| \lesssim \min \{ (t|\xi|)^{2\sigma} , (t|\xi|)^{\sigma - \frac{1}{2}} \}$ and so
    \begin{align*}
      \la \xi \ra^s (t|\xi|)^\sigma |J_\sigma (t|\xi|)| \, |\xi|^{-2\sigma}
      &\lesssim \chi (\xi ) \la \xi \ra^s (t|\xi|)^{2\sigma} |\xi|^{-2\sigma}
      + (1-\chi(\xi)) (t|\xi|)^{\sigma - \frac{1}{2}} \la \xi \ra^s |\xi|^{-2\sigma} \\
      &\lesssim t^{2\sigma} \chi(\xi) + (1-\chi(\xi)) t^{\sigma - \frac{1}{2}} \la \xi \ra^{s-\sigma - \frac{1}{2}} ,
    \end{align*}
    from which infer that
    \begin{equation*}
      \| (t\sqrt{-\Delta})^\sigma J_\sigma (t\sqrt{-\Delta}) (-\Delta)^{-\sigma} g \|_{2,s}
      \lesssim t^{2\sigma} \| g \|_2 + t^{\sigma - \frac{1}{2}} \| g \|_{2,s-\sigma - \frac{1}{2}} .
    \end{equation*}
  \end{proof}
\end{theorem}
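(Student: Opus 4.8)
The plan is to argue entirely on the Fourier side, using the closed form of the solution given by Theorem \ref{Besselsol}. That theorem yields
\begin{equation*}
  \widehat u(\xi,t) = \frac{\Gamma(1-\sigma)}{2^\sigma}\,(t|\xi|)^\sigma J_{-\sigma}(t|\xi|)\,\widehat f(\xi)
  + \sigma 2^\sigma\Gamma(\sigma)\,(t|\xi|)^\sigma J_\sigma(t|\xi|)\,|\xi|^{-2\sigma}\,\widehat g(\xi),
\end{equation*}
so by Plancherel's theorem it suffices to dominate the two Fourier multipliers $\la\xi\ra^s(t|\xi|)^\sigma|J_{-\sigma}(t|\xi|)|$ and $\la\xi\ra^s(t|\xi|)^\sigma|J_\sigma(t|\xi|)|\,|\xi|^{-2\sigma}$ by finite sums of terms of the form (constant or power of $t$) times $\la\xi\ra^{s'}$, uniformly in $t>0$, and then to read off the corresponding Sobolev norms of $f$ and $g$. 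In this way the whole estimate is reduced to elementary majorizations of scalar functions of $|\xi|$.

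The first step is to record the standard growth estimates for Bessel functions, namely $|J_{\pm\sigma}(r)|\lesssim r^{\pm\sigma}$ for $0<r<1$ and $|J_{\pm\sigma}(r)|\lesssim r^{-1/2}$ for $r\ge1$; the small-$r$ bound is the power series expansion already used before Theorem \ref{Besselsol}, and the large-$r$ bound is the classical oscillatory asymptotics of $J_\nu$ at infinity. From these one extracts the envelopes actually needed: $(t|\xi|)^\sigma|J_{-\sigma}(t|\xi|)|\lesssim\max\{1,(t|\xi|)^{\sigma-\frac12}\}$, which is $\lesssim1$ when $\sigma\le\frac12$; and $(t|\xi|)^\sigma|J_\sigma(t|\xi|)|\lesssim\min\{(t|\xi|)^{2\sigma},1\}$ when $\sigma\le\frac12$, respectively $\lesssim\min\{(t|\xi|)^{2\sigma},(t|\xi|)^{\sigma-\frac12}\}$ when $\sigma\ge\frac12$.

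Next I would split into the two regimes. When $\sigma\le\frac12$, the $f$-multiplier is bounded by a constant, so that term contributes $\lesssim\|f\|_{2,s}$; for the $g$-multiplier I would insert a smooth cut-off $\chi$ with $1_{B(0,1)}\le\chi\le1_{B(0,2)}$ and estimate separately on $\{|\xi|\le2\}$, where $(t|\xi|)^{2\sigma}|\xi|^{-2\sigma}\lesssim t^{2\sigma}$, and on $\{|\xi|\ge1\}$, where the factor is $\lesssim\la\xi\ra^{-2\sigma}$, obtaining $t^{2\sigma}\|g\|_2+\|g\|_{2,s-2\sigma}$. When $\sigma\ge\frac12$ I would instead cut in the variable $t\xi$: on $\{|t\xi|\le2\}$ the $f$-multiplier is $\lesssim\la\xi\ra^s$, while on $\{|t\xi|\ge1\}$ it is $\lesssim(t|\xi|)^{\sigma-\frac12}\la\xi\ra^s\le t^{\sigma-\frac12}\la\xi\ra^{s+\sigma-\frac12}$, and the two combine to $(1+t)^{\sigma-\frac12}\|f\|_{2,s+\sigma-\frac12}$; the $g$-multiplier is handled as before, now with the high-frequency bound $(t|\xi|)^{\sigma-\frac12}\la\xi\ra^s|\xi|^{-2\sigma}\lesssim t^{\sigma-\frac12}\la\xi\ra^{s-\sigma-\frac12}$, which gives $t^{2\sigma}\|g\|_2+t^{\sigma-\frac12}\|g\|_{2,s-\sigma-\frac12}$. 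Summing the contributions in each case yields the two asserted inequalities.

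The term-by-term majorizations are routine; the one point that needs care is the bookkeeping of the two different cut-offs — $\chi(\xi)$ to separate $|\xi|\sim1$ in the $g$-term, and $\chi(t\xi)$ to separate $|t\xi|\sim1$ in the $f$-term when $\sigma\ge\frac12$ — together with checking that every power of $t$ that appears is uniform over all $t>0$ (in particular that the $f$-contribution for $\sigma\ge\frac12$ is controlled by $(1+t)^{\sigma-\frac12}$ and not merely by $t^{\sigma-\frac12}$). The Bessel growth bounds above are classical, so I do not anticipate any real difficulty there.
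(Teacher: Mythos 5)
Your proposal is correct and follows essentially the same route as the paper's proof: Plancherel applied to the explicit Fourier-side solution of Theorem \ref{Besselsol}, the standard Bessel bounds $|J_{\pm\sigma}(r)|\lesssim r^{\pm\sigma}$ for $r<1$ and $r^{-1/2}$ for $r\geq 1$, and the same cut-off bookkeeping ($\chi(\xi)$ for the $g$-term, $\chi(t\xi)$ for the $f$-term when $\sigma\geq\frac{1}{2}$) leading to identical multiplier estimates.
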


\subsection*{An integral formula}

We finish by deriving the oscillatory subordination formula of Theorem \ref{mainresult1} from a classical integral representation
for modified Bessel functions 
\begin{equation*}
  K_\sigma (z) = \frac{\pi}{2} \frac{I_{-\sigma}(z) - I_\sigma (z)}{\sin (\sigma \pi)} , \quad |\arg z| < \pi ,
\end{equation*}
(see \cite[Section 5.7]{LEBEDEV}).
Noting that
\begin{equation*}
  K_\sigma (ir) = \frac{\pi}{2} \frac{i^{-\sigma} J_{-\sigma}(r) - i^\sigma J_\sigma (r)}{\sin (\sigma \pi)} , \quad r>0 ,
\end{equation*}
we see that the solution $u$ in Theorem \ref{Besselsol} for real initial data can be written as
\begin{align*}
  u(\cdot , t) &= \frac{2}{\pi} \frac{\Gamma (1-\sigma)}{2^\sigma} \, \real \Big( i^{1-2\sigma} (it\sqrt{-\Delta})^\sigma
  K_\sigma (it\sqrt{-\Delta})f \Big) \\
  &\mbox{} - \frac{2}{\pi} \sigma 2^\sigma \Gamma (\sigma) \, \imag \Big( (it\sqrt{-\Delta})^\sigma
  K_\sigma (it\sqrt{-\Delta}) (-\Delta)^{-\sigma}g \Big) .
\end{align*}


\begin{prop}
\label{Besselintform}
Let $0<\sigma < 1$. We have
\begin{equation*}
  (it\sqrt{-\Delta})^\sigma K_\sigma (it\sqrt{-\Delta}) f = \frac{i^\sigma t^{2\sigma}}{2^{1+\sigma}}
  \int_0^\infty e^{-i\frac{t^2}{4s}} \, e^{is\Delta} f \,\frac{ds}{s^{1+\sigma}} 
\end{equation*}
for Schwartz functions $f$.
\end{prop}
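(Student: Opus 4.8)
The plan is to reduce the operator identity to a scalar one by means of the Fourier transform, and then to derive that scalar identity from the classical integral representation of $K_\sigma$ by a contour deformation of exactly the kind used repeatedly in Section~2.

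Since $L=-\Delta$ and $f$ is Schwartz, I would first pass to the Fourier side: $\widehat{e^{is\Delta}f}(\xi)=e^{-is|\xi|^2}\widehat f(\xi)$, while $(it\sqrt{-\Delta})^\sigma K_\sigma(it\sqrt{-\Delta})$ is the Fourier multiplier $\xi\mapsto(it|\xi|)^\sigma K_\sigma(it|\xi|)$. From the small-argument behaviour $K_\sigma(z)\sim\frac12\Gamma(\sigma)(z/2)^{-\sigma}$ and the large-argument behaviour $K_\sigma(z)\sim\sqrt{\pi/2z}\,e^{-z}$, this multiplier is bounded near the origin and of polynomial growth at infinity, so both sides of the asserted identity are genuine $\mathcal{L}^2(\R^d)$ functions; moreover, by Lemma~\ref{complexgamma}, Proposition~\ref{lambdaeq} and Lemma~\ref{firstestimate} the truncated integrals converge (after the substitution $s\mapsto t^2/4s$) pointwise in $\xi$ with bounds uniform in the truncation. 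Hence it suffices to prove, for every $t>0$ and every $\xi$ with $|\xi|>0$ (the origin being spectrally negligible), the scalar identity
\[
  (it|\xi|)^\sigma K_\sigma(it|\xi|)=\frac{i^\sigma t^{2\sigma}}{2^{1+\sigma}}\int_0^\infty e^{-i\frac{t^2}{4s}}\,e^{-is|\xi|^2}\,\frac{ds}{s^{1+\sigma}},
\]
where the integral means $\lim_{\varepsilon\to0,\,R\to\infty}\int_\varepsilon^R$. Writing $a=t|\xi|>0$ and substituting $s=\frac{t}{2|\xi|}q$ turns this into the equivalent statement
\[
  K_\sigma(ia)=\frac12\int_0^\infty e^{-i\frac a2\left(q+q^{-1}\right)}\,\frac{dq}{q^{1+\sigma}}.
\]

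To prove this I would start from the classical representation $K_\sigma(x)=\frac12\int_0^\infty e^{-\frac x2(q+q^{-1})}q^{-1-\sigma}\,dq$, valid for $\real x>0$ (a consequence of $K_\sigma(x)=\int_0^\infty e^{-x\cosh\tau}\cosh(\sigma\tau)\,d\tau$; see \cite[Section 5.10]{LEBEDEV}), and combine analytic continuation with Cauchy's theorem. Fix $\theta_0\in(0,\pi/2)$ and let $\Gamma$ be the path obtained by concatenating the ray $\{\rho e^{i\theta_0}:0<\rho\le1\}$, the unit-circle arc running from $e^{i\theta_0}$ through $1$ to $e^{-i\theta_0}$, and the ray $\{\rho e^{-i\theta_0}:1\le\rho<\infty\}$; put $\Phi(x)=\frac12\int_\Gamma e^{-\frac x2(w+w^{-1})}w^{-1-\sigma}\,dw$. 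On the two rays the leading term of $\real\bigl(-\tfrac x2(w+w^{-1})\bigr)$, as $\rho\to0$ and $\rho\to\infty$ respectively, is a negative constant times $\rho^{-1}$ and times $\rho$ precisely when $|\arg x-\theta_0|<\pi/2$; hence $\Phi$ converges absolutely, locally uniformly, and is holomorphic on the sector $|\arg x-\theta_0|<\pi/2$, which contains the closed sector $0\le\arg x\le\pi/2$. For $x>0$, Cauchy's theorem collapses $\Gamma$ onto $(0,\infty)$ — the connecting circular arcs of radii $\to0$ and $\to\infty$ vanish because $\cos\theta_0>0$ — so $\Phi(x)=K_\sigma(x)$, and therefore, by the identity theorem, $\Phi\equiv K_\sigma$ on the whole sector; in particular $\Phi(ia)=K_\sigma(ia)$. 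On the other hand, for $x=ia$ one more application of Cauchy's theorem deforms $\Gamma\cap\{\varepsilon\le|w|\le R\}$ onto $[\varepsilon,R]$: on the arc of radius $\varepsilon$ the phase acquires an imaginary part forcing exponential decay $e^{-c\varepsilon^{-1}\phi}$ away from the real axis, so that arc contributes $O(\varepsilon^{1-\sigma})$, and likewise the arc of radius $R$ contributes $O(R^{-1-\sigma})$; thus $\Phi(ia)=\frac12\lim_{\varepsilon\to0,\,R\to\infty}\int_\varepsilon^R e^{-i\frac a2(q+q^{-1})}q^{-1-\sigma}\,dq$. Equating the two formulas for $\Phi(ia)$ gives the scalar identity, and undoing the substitution $s=\frac t{2|\xi|}q$ proves the proposition.

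The main obstacle is that the integral on the right is only conditionally convergent: near $q=0$ the integrand oscillates while $q^{-1-\sigma}$ blows up, so the classical formula for $K_\sigma$ cannot simply be evaluated on the imaginary axis, and no single rotation of the ray of integration helps — tilting $q$ into the upper half-plane restores absolute convergence at $q=0$ but ruins it at $q=\infty$, and vice versa. This is exactly the difficulty confronted throughout the paper, and the cure is the same as there: split the half-line at the critical point of the phase (here $q=1$, after the normalising substitution) and bend the two pieces into opposite half-planes, i.e.\ integrate along $\Gamma$. The only genuine computation left is then the elementary estimates showing that the connecting circular arcs are negligible.
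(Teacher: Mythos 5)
Your proposal is correct and follows the same overall strategy as the paper: reduce the operator identity to a scalar one on the Fourier side, with Lemma \ref{firstestimate} supplying the uniform-in-truncation bounds that justify the $\mathcal{L}^2$ limit, and derive the scalar identity from a classical integral representation of $K_\sigma$ (your symmetric form is equivalent, via $s=\tfrac{z}{2}q$, to the representation \cite[Eq.\ (5.10.25)]{LEBEDEV} used in the paper) by combining Cauchy's theorem with analytic continuation in the argument. The one genuine difference lies in how the purely imaginary argument is reached. The paper rotates the integration variable onto the imaginary half-axis, obtaining a function analytic only in the open sector $0<\arg z<\tfrac{\pi}{2}$; since $z=ir$ sits on the boundary, it must then take the limit of $K_\sigma(ir+\delta)$ as $\delta\to 0$ and justify interchanging this with the truncation limits $\varepsilon\to 0$, $R\to\infty$ by a uniformity argument. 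Your bent contour $\Gamma$ (rays at angles $\pm\theta_0$ joined by a unit-circle arc, bending into the upper half-plane near $0$ and the lower half-plane near $\infty$, which is the right choice for decay) produces a function $\Phi$ holomorphic on the sector $|\arg x-\theta_0|<\tfrac{\pi}{2}$, whose \emph{interior} contains $\arg x=\tfrac{\pi}{2}$, so $\Phi(ia)=K_\sigma(ia)$ comes directly from the identity theorem, and the conditionally convergent real-axis integral is recovered by one further deformation at $x=ia$, with connecting arcs correctly estimated by $O(\varepsilon^{1-\sigma})$ and $O(R^{-1-\sigma})$. In short, you trade the paper's boundary-limit/uniformity step for a second elementary deformation; both routes rest on the same ingredients, and your arc estimates and analyticity conditions check out.
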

\begin{proof}
We first show that
\begin{equation*}
  K_\sigma (ir) = \frac{r^\sigma}{2^{1+\sigma}}
  \int_0^\infty e^{-i\frac{r^2}{4s}} \, e^{-is} \,\frac{ds}{s^{1+\sigma}} , \quad r>0 .
\end{equation*}

By \cite[Eq. (5.10.25)]{LEBEDEV} we have the integral representation
\begin{equation*}
  K_\sigma (z) = \frac{z^\sigma}{2^{1+\sigma}} \int_0^\infty e^{-\frac{z^2}{4s}} \, e^{-s} \,\frac{ds}{s^{1+\sigma}}, 
  \quad |\arg z | < \frac{\pi}{4} .
\end{equation*}
Note that when $|\arg z| > \frac{\pi}{4}$, this integral diverges at zero because $\real \, e^{-\frac{z^2}{4s}} \gtrsim 1$.

If we introduce the bianalytic function
\begin{equation}
\label{bian}
  F(z,w) = e^{-\frac{z^2}{4w}} \, e^{-w} \, w^{-1-\sigma} , \quad |\arg z| < \frac{\pi}{2} , \: |\arg w | < \pi ,
\end{equation}
the integral above can be written $\int_0^\infty F(z,s) \, ds$.

Integrating instead $w\mapsto F(z,w)$ along the imaginary half-axis, we obtain the analytic function
\begin{equation*}
  z \mapsto i^{-\sigma} \int_0^\infty e^{i\frac{z^2}{4s}} \, e^{-is} \,\frac{ds}{s^{1+\sigma}} , \quad 0 < \arg z < \frac{\pi}{2} .
\end{equation*}
Indeed, for such $z$ the integral is absolutely convergent since 
$|e^{i\frac{z^2}{4s}}| = e^{-c\frac{|z|^2}{4s}}$, where $c = \sin (2\arg z) > 0$.
We claim that
\begin{equation}
\label{imagpath}
  \int_0^\infty F(z,s) \,ds = i\int_0^\infty F(z,is) \,ds , \quad 0 < \arg z < \frac{\pi}{4} .
\end{equation}

To see this, we let $\varepsilon >0$ and $R < \infty$, and use Cauchy's integral theorem to write
\begin{equation*}
  \int_\varepsilon^R F(z,s) \,ds = \Big( \int_{\arc_\varepsilon} - \int_{\arc_R} \Big) F(z,w) \,dw 
  + i \int_\varepsilon^R F(z,is) \,ds ,
\end{equation*}
where $\arc_\varepsilon$ and $\arc_R$ are the paths
\begin{equation*}
  \theta \mapsto \varepsilon e^{i\theta} \quad \textup{and} \quad \theta\mapsto Re^{i\theta} , 
  \quad 0 \leq \theta \leq \frac{\pi}{2} .
\end{equation*}
Then
\begin{equation*}
  \int_{\arc_\varepsilon} F(z,w) \,dw = i \int_0^{\frac{\pi}{2}} e^{-\frac{z^2}{4\varepsilon}e^{-i\theta}}
  e^{-\varepsilon e^{i\theta}} \varepsilon^{-\sigma} e^{-i\sigma\theta} \,d\theta ,
\end{equation*}
and with $\phi = \arg z \in (0,\frac{\pi}{4})$ we have
\begin{equation*}
  | e^{-\frac{z^2}{4\varepsilon}e^{-i\theta}} | = e^{-|z|^2\cos (2\phi - \theta) /4\varepsilon}
  = e^{-|z|^2\sin (\frac{\pi}{2} - 2\phi + \theta) /4\varepsilon} .
\end{equation*}
Since $\sin \alpha > c\min (\alpha , \pi - \alpha )$ for $0 < \alpha < \pi$, it follows that
\begin{equation*}
  \sin \Big( \frac{\pi}{2} - 2\phi + \theta \Big) > c\min (\theta , \pi / 2 - \theta ) .
\end{equation*}
As a consequence,
\begin{equation*}
  \Big| \int_{\arc_\varepsilon} F(z,w) \,dw \Big|
  \lesssim \varepsilon^{-\sigma} \int_0^{\frac{\pi}{2}} 
  e^{-\frac{c|z|^2\min (\theta , \pi / 2 - \theta )}{4\varepsilon}} \,d\theta
  \lesssim \varepsilon^{1-\sigma} \longrightarrow 0 , \quad \textup{as} \quad \varepsilon \to 0.
\end{equation*}
Likewise,
\begin{equation*}
  \int_{\arc_R} F(z,w) \,dw = i \int_0^{\frac{\pi}{2}} e^{-\frac{z^2}{4R}e^{-i\theta}}
  e^{-R e^{i\theta}} R^{-\sigma} e^{-i\sigma\theta} \,d\theta \longrightarrow 0 , \quad \textup{as} \quad R \to \infty ,
\end{equation*}
and \eqref{imagpath} follows.

Now since the functions
\begin{equation*}
  z\mapsto \frac{z^\sigma}{2^{1+\sigma}} i\int_0^\infty F(z,is) \,ds
\end{equation*}
and $K_\sigma (z)$ are both analytic for $0 < \arg z < \frac{\pi}{2}$ and agree for $0 < \arg z < \frac{\pi}{4}$
because of \eqref{imagpath}, they must coincide, so that
\begin{equation*}
  K_\sigma (z) = \frac{i^{-\sigma}z^\sigma}{2^{1+\sigma}} \int_0^\infty e^{i\frac{z^2}{4s}} \, e^{-is} 
  \,\frac{ds}{s^{1+\sigma}} , \quad 0 < \arg z < \frac{\pi}{2} .
\end{equation*}

Finally,
\begin{equation*}
  K_\sigma (ir) = \lim_{\delta\to 0} K_\sigma (ir+\delta) 
  = \frac{r^\sigma}{2^{1+\sigma}} \lim_{\delta\to 0} \int_0^\infty e^{i\frac{(ir+\delta)^2}{4s}} e^{-is} 
  \,\frac{ds}{s^{1+\sigma}}
  = \frac{r^\sigma}{2^{1+\sigma}} \lim_{\varepsilon , R} \int_\varepsilon^R 
  e^{-i\frac{r^2}{4s}} \, e^{-is} \,\frac{ds}{s^{1+\sigma}} ,
\end{equation*}
where the swapping of the limits in the last equality is justified since the arguments above show that
the limit in $\varepsilon$ and $R$ is uniform in $\delta$.

The proposition now follows by Fourier transform from the identity
\begin{equation*}
  (it|\xi|)^\sigma K_\sigma (it|\xi|)
  = \frac{i^\sigma (t|\xi|)^{2\sigma}}{2^{1+\sigma}} \lim_{\varepsilon , R} \int_\varepsilon^R e^{-i\frac{t^2}{4s}|\xi|^2}  e^{-is} \,\frac{ds}{s^{1+\sigma}}
  = \frac{i^\sigma t^{2\sigma}}{2^{1+\sigma}} \lim_{\varepsilon , R} \int_\varepsilon^R e^{-i\frac{t^2}{4s}} \, e^{-is|\xi|^2} \,\frac{ds}{s^{1+\sigma}} .
\end{equation*}
Indeed, after a change of variables $s' = \frac{t^2}{4s}$ the last integral is amenable to Lemma
\ref{firstestimate} which gives
\begin{equation*}
  \Big| \int_\varepsilon^R e^{-i\frac{t^2}{4s}} \, e^{-is|\xi|^2} \,\frac{ds}{s^{1+\sigma}} \Big| 
  \lesssim
  \begin{cases}
      1 , &\sigma\leq \frac{1}{2} , \\
      1 + |\xi|^{\sigma - \frac{1}{2}} , &\sigma > \frac{1}{2} .
  \end{cases}
\end{equation*}
The functions
\begin{equation*}
  \xi \mapsto \frac{i^\sigma t^{2\sigma}}{2^{1+\sigma}} \int_\varepsilon^R e^{-i\frac{t^2}{4s}} \, e^{-is|\xi|^2} 
  \widehat{f}(\xi) \,\frac{ds}{s^{1+\sigma}} 
\end{equation*}
therefore converge in $\mathcal{L}^2(\R^d)$, as $\varepsilon\to 0$ and $R\to\infty$, which justifies the exchange of limits with
the Fourier transform.
\end{proof}

\begin{remark}
  It is clear that Proposition \ref{Besselintform} holds for any non-negative self-adjoint operator $L$ in 
  place of $-\Delta$ when $f$ is suitably chosen (cf. Theorem \ref{mainresult1}). 
  Moreover, recalling that
  \begin{equation*}
    \sqrt{ir} K_{1/2} (ir) = \sqrt{\frac{\pi}{2}} e^{-ir} , \quad r>0,
  \end{equation*}
  we see that in the classical case $\sigma = \frac{1}{2}$ the formula reads
  \begin{equation*}
  e^{-it\sqrt{L}} f = \frac{\sqrt{i}t}{2\sqrt{\pi}}
  \int_0^\infty e^{-i\frac{t^2}{4s}} \, e^{-isL} f \,\frac{ds}{s^{3/2}} . 
\end{equation*}
\end{remark}

\bibliographystyle{alpha}
\bibliography{references}

\end{document}